\newtheorem{theorem}{Theorem}[section]
\newtheorem{lemma}[theorem]{Lemma}
\newtheorem{corollary}[theorem]{Corollary}
\newtheorem{proposition}[theorem]{Proposition}
\newtheorem{prop}[theorem]{Proposition}
\theoremstyle{definition}
\newtheorem{remark}[theorem]{Remark}
\newtheorem{definition}[theorem]{Definition}
\newtheorem{problem}[theorem]{Problem}
\newtheorem{question}[theorem]{Question}
\newtheorem{example}[theorem]{Example}
\numberwithin{equation}{section}
\newcommand{\cM}{\mathcal{M}}
\newcommand{\h}{\mathcal{H}}
\newcommand{\D}{\mathbb{D}}
\newcommand{\F}{\mathcal{F}}
\newcommand{\C}{\mathbb{C}}
\newcommand{\supp}{\operatorname{supp}}
\newcommand{\cls}{\operatorname{cl}}
\newcommand{\spn}{\operatorname{span}}
\begin{document}

\allowdisplaybreaks[3]

\title{Forward Operator Monoids}


\author{Christopher Felder}
\address{Department of Mathematics, Indiana University, Bloomington, Indiana, 47405}
\email{cfelder@iu.edu}

\makeatletter
\@namedef{subjclassname@2020}{%
  \textup{2020} Mathematics Subject Classification}
\makeatother

\subjclass[2020]{47D03, 47A16}
\date{\today}


\keywords{Operator monoid, operator semigroup, cyclic vector, inner vector}

\begin{abstract}
This work studies collections of Hilbert space operators which possess a strict monoid structure under composition. These collections can be thought of as discrete unital semigroups for which no subset of the collection is closed under composition (apart from the trivial subset containing only the identity). We call these collections \textit{forward operator monoids}. Although not traditionally painted in this light, monoids of this flavor appear in many areas of analysis; as we shall see, they are intimately linked to several well-known problems. The main aim of this work is to study three classes of vectors associated to a given operator monoid: cyclic vectors, generalized inner vectors, and vectors which are both cyclic and inner (we refer to this last class as aleph vectors). We show, when they exist, aleph vectors are unique up to a multiplicative constant.  We also show, under the lens of a least-squares  problem, how an aleph vector can be used to characterize all cyclic and inner vectors. As an application of this theory, we consider monoids related to the Periodic Dilation Completeness Problem and the Riemann Hypothesis.  After recovering some known results, we conclude by giving a further reformulation of the Riemann Hypothesis based on work of B\'{a}ez-Duarte \cite{B-D} and Noor \cite{Noor}. 
\end{abstract}

\maketitle


\section{Introduction and Background}

In many areas of mathematics, it is often of interest or importance to understand the dynamical behavior of a mapping applied to a point in its domain: given a set $X$ and a mapping $T: X \to X$, what can be said as $T$ is repetitively applied to an element $x \in X$? How `far-traveling' is $x$ under the iterative action of $T$? If $X$ is a topological space, what can be said of the density of the orbit of $x$ under $T$? If $X$ is a vector space, what can be said of the linear combinations of $T^nx$ as $n$ is exhausted ad infinitum? 

This work aims to explore questions of this ilk when the mappings are bounded linear transformations on a Hilbert space. We will consider not only the iterates of a single transformation, but also the action of families of linear operators which adhere to certain algebraic properties. We call these families \textit{forward operator monoids}.

In short, these collections of operators can be thought of as discrete unital semigroups for which no finite subset of the collection is closed under composition (apart from the trivial subset containing only the identity operator). 
The most elementary example of such an operator monoid is $\{ T^k \}_{k = 0}^\infty$, where $T$ is a linear operator acting on a Hilbert space, and $T^j \neq T^k$ for any $j,k \ge 0$ whenever $j \neq k$. The monoid property enjoyed by the operators here is inherited from the `strict' monoid $\mathbb{N} = \{ 0, 1, 2, \ldots \}$ with the binary operation of addition (in this setting we have $T^jT^k = T^{j+k}$ for all $j, k \in \mathbb{N}$). This is an important example, but the most interesting examples are collections of operators which follow a multiplicative rule. We discuss several such collections in Section \ref{apps}.

Although these collections are sometimes referred to as \textit{semigroups}, their structure is often much richer than this name implies.  
While they are not often presented in the way which we shall see here, operator monoids of this type arise in many areas of mathematics. As we will see, they are closely connected to several well-known problems, including the Periodic Dilation Completeness Problem and the Riemann Hypothesis.
It is not our goal to unilaterally tackle such problems, but rather to work within their periphery, addressing some interesting problems and unifying some of the theory.

Our main objective is to study three classes of vectors associated to a given operator monoid: 
\begin{enumerate}
\item \underline{Cyclic vectors}: vectors for which the linear combinations of the operator monoid applied to the vector are dense in the space at hand.
\item \underline{Inner vectors}: vectors which are orthogonal to any (non-identity) element of the monoid applied to itself; as we will see, these vectors are antipodal, in some sense, to cyclic vectors. 
\item \underline{Aleph vectors}: vectors which are both cyclic and inner. 
\end{enumerate}

In order to expedite more enlightening conversation surrounding these objects, we keep the forthcoming discussion of background to a minimum, instead providing more relevant detail, when appropriate, in subsequent sections. 

For example, the notion of cyclicity has wide-ranging relevance in many areas of mathematics, but we will not indulge in reviewing that vast literature here. In lieu of this, we will mention works which contain key ideas and point to references therein, as they are relevant to our discourse.
In particular, Section \ref{apps} discusses several concrete examples for which cyclic vectors are relevant to this work (important literature here, which we discuss later, includes \cite{B-D, MR2226127, Nik, Noor, MR4203042}).

The history of inner vectors begins in a factorization result for functions in the Hardy space of the unit disk, going back to F. Riesz \cite{MR1544621}; these inner factors are functions that are orthogonal to any non-zero iterate of the forward shift applied to the function itself. This notion has been naturally extended to many other spaces of analytic functions on the disk and other domains, to function spaces of several complex variables (wherever the coordinate shift operators are bounded), to non-commutative function spaces, and to general reproducing kernel Hilbert spaces. We point to \cite{MR4549696} and the references therein, and also to the recent survey \cite{Sampat}.

Despite classical standing, it seems that inner vectors in a most generalized operator-theoretic sense have not been considered outside of the works of Cheng, Mashreghi, and Ross. In \cite{MR4061942}, these authors explore a notion of inner  vectors for Toeplitz operators acting on the Hardy space of the disk. In \cite{MR4019469}, the same authors push further by entertaining inner vectors for general Hilbert space operators. In both of these works, a vector $h$ in a Hilbert space $\h$ is inner for a bounded linear operator $T$ acting on $\h$ if $\langle h, T^k h \rangle_\h = 0$ for all $k \ge 1$.

Aleph vectors appear to be unstudied, with the definition of such vectors novel to this work. Although it will take some effort to uncover the nature of these vectors, it is our hope to convince the reader that their study is worthwhile, as they encode information about both cyclic and inner vectors in general.

The outline and contributions of this paper are as follows:
\begin{itemize}
\item Section \ref{n-and-d} presents relevant definitions and background, including the formal definition of a forward operator monoid, and vectors which are cyclic, inner, and aleph for these operator monoids. 
\item Section \ref{aleph} uncovers some basic properties of aleph vectors, including their existence and uniqueness.
\item Section \ref{least-squares} analyzes a least-squares approximation problem which links aleph vectors to the characterization of both cyclic and inner vectors (see Theorems \ref{al-approx} and \ref{opa}, respectively). 
\item Section \ref{apps} applies the results of the previous sections to some concrete examples, including forward operator monoids related to the Periodic Dilation Completeness Problem and the Riemann Hypothesis. After recovering some known results, Theorem \ref{RH-reform} gives a reformulation of the Riemann Hypothesis, in the spirit of the least-squares approximation discussed in Section \ref{least-squares}.
\end{itemize}


\section{Notation and Definitions}\label{n-and-d}
Throughout this work, $\h$ will be a separable infinite-dimensional Hilbert space and $\mathcal{B}(\h)$ will denote the set of bounded linear operators on $\h$. We will consider collections $\F \subseteq \mathcal{B}(\h)$ which are discrete, contain the identity, and have strict monoid structures. 
The \textit{strict} monoid structure means that $\F$ can be indexed by a monoid which does not possess inverses, and where composition of elements in $\F$ respects the binary operation on the indexing monoid.  We denote such an indexing monoid by the triple $(\Lambda, b, n_0)$, where $\Lambda$ is a discrete set, $b$ is a commutative binary operator on $\Lambda \times \Lambda$, and $n_0 \in \Lambda$ is the identity element.
Let us precisely record this now:

\begin{definition}[Forward operator monoid]\label{FOM}
A collection $\F \subseteq \mathcal{B}(\h)$, containing the identity,  is a \textit{forward operator monoid} if 
there exists a strict monoid $(\Lambda, b, n_0)$, which is commutative and discrete, so that 
\[
\F = \{ T_j \}_{j\in \Lambda}
\]
and the map $\rho :  (\Lambda, b, n_0) \to (\F, \circ, I)$, given by 
\[
\rho(j) = T_j,
\]
is a monoid isomorphism (here, $\circ$ represents standard composition on $\mathcal{B}(\h)$ and $I$ is the identity on $\h$). 
\end{definition}

We note that the map $\rho$ above is akin to a `representation' of $\Lambda$ on $\h$. However, the finer properties of this mapping and the underlying monoid which indexes the collection of operators are not of interest here. The use of the this correspondence is solely to envelop, in one fell swoop, several different types of families of operators which are of interest. Essentially, these families possess the same basic algebraic structure but are indexed in different ways. 
One could, more awkwardly, refer to a forward operator monoid as a \textit{discrete unital operator semigroup for which no finite subset of the semigroup is closed under composition}; we prefer a more succinct terminology, and the above definition is born from this preference (in fact, our definition of forward operator monoid implies a bit more than this, but we feel the point is made). These collections should not be confused with continuous ($C_0$) semigroups or any other continuum of operators.

By definition, it is elementary to check that a forward operator monoid $\F = \{ T_j \}_{j\in \Lambda}$ possesses the following properties:
\begin{enumerate}
\item $T_{n_0} = I$, the identity on $\h$.
\item $T_j T_k = T_{b(j, k)}$ for all $j, k \in \Lambda$.
\item As $b$ is assumed commutative, the elements of $\F$ commute.
\item \label{forward1}$T_j T_k = I$ if and only if $j = k = n_0$.  
\item \label{forward2}$T_j T_k = T_k$ if and only if $j = n_0$.  
\end{enumerate}
Items (\ref{forward1}) and (\ref{forward2}) above will be referred to as the \textit{forward properties} of the operator monoid. These properties ensure that no element of $\F$ is nilpotent or periodic, and that composition of elements in $\F$ moves, in some sense, \textit{forward} through the monoid. Some of the results to come do not explicitly require this property, but we will point out when it is used.

The forward operator monoids with which we will primarily deal are additive monoids
\[
T_jT_k = T_{j+k} \ \ \forall j,k = 0, 1, 2, \ldots
\]
and multiplicative monoids
\[
T_jT_k = T_{jk} \ \ \forall j,k = 1, 2, 3, \ldots. 
\]

As previously mentioned, the most elementary constructive example of a forward operator monoid comes from iterating a single operator $T \in \mathcal{B}(\h)$ which is neither nilpotent nor periodic. The forward operator monoid is then $\F = \{ T^j \}_{j=0}^\infty$, where the monoid structure is simply inherited by the non-negative integers with addition. There are also natural forward operator monoids with a multiplicative structure, which we study in Section \ref{apps}.

In most cases, we will inquire about the behavior of a forward operator monoid acting on a single vector. Without explicit mention of the underlying set which indexes our operator monoid, we can make the following definitions, which play the leading part in this work. 
\begin{definition}\label{main-def}
Let $\F \subseteq \mathcal{B}(\h)$ be a forward operator monoid. We say that a non-zero vector $h \in \h$ is: 
\begin{enumerate}
\item  \underline{cyclic for $\F$}  (or $\F$-cyclic) if the set
\[
[h]_{\F}:= \cls_\h\left(\spn\{ T h : T \in \F \}\right)
\]
is equal to all of $\h$, where $\cls_\h(V)$ denotes the closure of the subspace $V \subseteq \h$. 
\item \underline{inner for $\F$} (or $\F$-inner) if
\[
\langle Th, h \rangle_\h = 0 \ \ \ \forall T \in  \F \setminus \{I\}.
\]
\item an \underline{aleph for $\F$} (or $\F$-aleph) if $h$ is both cyclic and inner for $\F$. 
\end{enumerate}
\end{definition}

Note for any $h\in \h$, the subspace $[h]_{\F}$ is always $\F$-invariant; if $u \in [h]_{\F}$, then $Tu \in [h]_{\F}$ for all $T \in \F$. Further, the subspace $[h]_{\F}$ is the smallest such subspace containing $h$. 
We will refer to $[h]_{\F}$ as the \textit{$\F$-invariant subspace generated by $h$}.

\begin{remark}
The nomenclature assigned here is not arbitrary; it is informed by the classical study of the forward shift on $\ell^2(\mathbb{N})$, given by
\[
(a_0, a_1, a_2,  \ldots) \mapsto (0, a_0, a_1, a_2, \ldots).
\] 
This operator is unitarily equivalent to the forward shift acting on Hardy space of the unit disk $\mathbb{D}$
\[
H^2:=\left\{f \in \operatorname{Hol}(\D) : \sup_{0\le r <1} \int_0^{2\pi} \left|f(re^{i\theta})\right|^2 \, d\theta < \infty \right\},
\]
where $\operatorname{Hol}(\D)$ is the collection of holomorphic functions on $\D$.
The forward shift $S$ in this setting is simply multiplication by the independent variable:
\[
(Sf)(z) = zf(z), \ \ \ z \in \D.
\]
If we consider the collection of iterates formed by $S$, say, $\mathcal{S} := \{ S^j \}_{j =0}^\infty$, it is elementary to check that this forms an additive forward operator monoid. Classically, a function $f \in H^2$ is called \textit{inner} if $|f| = 1$ a.e. on the unit circle $\mathbb{T}$. It turns out that this definition coincides with Definition \ref{main-def} when applied to the forward operator monoid $\mathcal{S}$. Inner and cyclic vectors for the shift have played a critical role in answering operator and function theoretic questions in $H^2$. 

For example, a celebrated result of Beurling describes all shift-invariant subspaces of $H^2$ as $\theta H^2$, where $\theta$ is an inner function for the monoid generated by the shift, and all cyclic vectors for the shift as so-called \textit{outer} functions, which also arise in the factorization theorem previously mentioned.

Further, it was shown in \cite{MR4692840} that for many Hilbert spaces of analytic functions on the disk, there is a unique aleph for the monoid generated by the shift; up to a multiplicative constant, this is the unit constant function $u(z) = 1$.
\end{remark}


\section{Aleph vectors}\label{aleph}

We begin by examining the existence and uniqueness of aleph vectors for a forward operator monoid. 
We will eventually show that when an aleph vector exists, it is unique (up to a multiplicative constant). 
However, in the following simple example, we immediately uncover that a forward operator monoid may not always possess an aleph vector, and in some cases may have neither a cyclic nor inner vector. 
\begin{example}
Let $\alpha \in \C$ be non-zero and not a root of unity.  Let $A = \alpha I$ and let $\mathcal{A}$ be the semigroup generated by the iterates of $A$, i.e. $\mathcal{A} = \{ A^j \}_{j=0}^\infty$.  For any $h \in \h$, we have $[h]_\mathcal{A} = \operatorname{span}\{h\}$, so no vector can be cyclic for $\mathcal{A}$. Further, for all $j\ge0$ and all non-zero $h \in \h$, we have
\[
\langle h, A^jh \rangle = \alpha^j \|h\|^2 \neq 0.
\]
In turn, $\mathcal{A}$ can have no inner vector. 
\end{example}

Nonetheless, as we will see in the next section, when an aleph vector does exist, it is of interest because it can be used, under the lens of a least-squares problem, to characterize both inner and cyclic vectors. 
Let us provide a lemma that will allow us to establish the uniqueness of aleph vectors.

\begin{lemma}\label{unit_ortho}
Let $\F \subseteq \mathcal{B}(\h)$ be a forward operator monoid and suppose that $u$ is an aleph vector for $\F$. For any $h \in \h$, have
\[
u \perp \cls_\h\left(\spn\left\{ Th : T \in \F \setminus \{ I\} \right\} \right). 
\]

\end{lemma}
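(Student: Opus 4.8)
The plan is to reduce the orthogonality to a closed span down to a pointwise statement, and then play the innerness of $u$ against its cyclicity. Recall that a vector is orthogonal to $\cls_\h\left(\spn(A)\right)$ if and only if it is orthogonal to each element of $A$. Thus, fixing $h \in \h$, it suffices to show that $\inner{u}{Th} = 0$ for every fixed $T \in \F \setminus \{I\}$. I would fix such a $T$ and argue that this single inner product vanishes.

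First I would invoke cyclicity. Since $u$ is an aleph, it is in particular $\F$-cyclic, so $M := \spn\{ Su : S \in \F\}$ is dense in $\h$. Choose a sequence $(h_n)$ in $M$ with $h_n \to h$, writing each $h_n = \sum_{k} \co{k}{n} S_k^{(n)} u$ as a finite combination with $S_k^{(n)} \in \F$. Because $T$ is bounded we have $Th_n \to Th$, and because the inner product is continuous, $\inner{u}{Th} = \lim_n \inner{u}{Th_n}$. It therefore remains only to show that each $\inner{u}{Th_n} = 0$.

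Next I would evaluate $\inner{u}{Th_n}$ using the monoid structure. Expanding the finite sum gives $Th_n = \sum_k \co{k}{n} \bigl(T S_k^{(n)}\bigr) u$, and by the monoid property each composition $T S_k^{(n)}$ is again an element of $\F$. Here the \emph{forward property} enters decisively: since $T \neq I$, property (\ref{forward1}) guarantees $T S_k^{(n)} \neq I$, so $T S_k^{(n)} \in \F \setminus \{I\}$ for every $k$. Innerness of $u$ then yields $\inner{u}{\bigl(T S_k^{(n)}\bigr)u} = \overline{\inner{\bigl(T S_k^{(n)}\bigr)u}{u}} = 0$ for each $k$, and hence $\inner{u}{Th_n} = 0$. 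Passing to the limit gives $\inner{u}{Th} = 0$, as desired.

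The density supplied by cyclicity and the continuity used to pass the limit through the inner product are routine. The step I expect to be the crux is the appeal to the forward property to ensure $T S_k^{(n)} \neq I$: this is exactly what converts a composition involving a non-identity element into another non-identity element of $\F$, so that innerness can be applied. Were some composition allowed to collapse to $I$, the corresponding term would be $\inner{u}{u} = \|u\|^2 \neq 0$ and the argument would fail; so this is precisely where the hypothesis that $\F$ is \emph{forward} does the work.
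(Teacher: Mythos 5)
Your proof is correct and follows essentially the same route as the paper's: approximate $h$ by finite combinations of $\F u$ using cyclicity, push the fixed non-identity $T$ through the limit by boundedness and continuity of the inner product, and kill each term $\inner{u}{(TS)u}$ by combining the forward property (so $TS \neq I$) with innerness of $u$. The only cosmetic difference is that you perform the reduction to single vectors $Th$ at the outset, while the paper performs the corresponding density argument at the end.
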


\begin{proof}
Suppose that $\F$ is indexed by the monoid $(\Lambda, b, n_0)$ and suppose that $(F_m)$ is an increasing sequence of finite sets such that $F_m \to \Lambda$ (such a sequence exists as $\Lambda$ is assumed discrete). As $u$ is cyclic, for any $h\in \h$ there exist constants $(c_j)_{j \in \Lambda} \subseteq \C$ (depending on $h$) so that
\[
\sum_{j \in F_m} c_j T_j u \ \xrightarrow{m \to \infty} \ h.
\]
Further, for each $k \in \Lambda$, we have
\[
\sum_{j \in F_m} c_j T_{b(j, k)} u \ \xrightarrow{m \to \infty} \ T_k h.
\]
This yields
\[
\left\langle u, \sum_{j \in F_m} c_j T_{b(j, k)} u \right\rangle \ \xrightarrow{m \to \infty} \  \langle u, T_k h\rangle.
\]
Now, if we suppose that $k \neq n_0$, then it must be that $b(j, k) \neq n_0$. In turn, by the forward property of the operator monoid, $T_{b(j, k)} \neq I$. Thus, by the inner-ness of $u$, for all $k \neq n_0$, we have
\[
\left\langle u, \sum_{j \in F_m} c_j T_{b(j, k)} u \right\rangle = 0.
\]
Letting $m \to \infty$, we have $u \perp T_k h$ for all $k \neq n_0$. The result now follows, as $\spn\left\{T_k h :  k \in \Lambda\setminus \{ n_0\}\right\}$ is dense in $\cls_\h\left(\spn\left\{ Th : T \in \F \setminus \{ I\} \right\} \right)$.
\end{proof}

With this lemma in hand, we can immediately address the uniqueness of aleph vectors.
\begin{theorem}\label{uniqueness}
Let $\F \subseteq \mathcal{B}(\h)$ be a forward operator monoid. If an aleph vector for  $\F$ exists, then, up to a multiplicative constant, it is unique. 
\end{theorem}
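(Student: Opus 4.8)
The plan is to exploit Lemma \ref{unit_ortho} twice, once for each of two hypothetical aleph vectors, turning the interplay of cyclicity and inner-ness into a rigid orthogonal splitting. Suppose $u$ and $v$ are both aleph vectors for $\F$. My goal is to show $u = \lambda v$ for some scalar $\lambda \in \C$. The governing idea is that the inner-ness of $v$, combined with its cyclicity, should decompose $\h$ into the line $\C v$ and the ``everything else'' subspace generated by the non-identity elements of the monoid; applying the lemma to $u$ should then force $u$ to live entirely on that line.

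Concretely, I would set
\[
M := \cls_\h\left(\spn\left\{ Tv : T \in \F \setminus \{I\} \right\}\right).
\]
Applying Lemma \ref{unit_ortho} to the aleph vector $v$ with the choice $h = v$ gives $v \perp M$. Since $v$ is cyclic, $\h = [v]_\F = \cls_\h\left(\spn\{v\} + M\right)$, and because $v \perp M$ this closure is in fact the orthogonal direct sum $\h = \C v \oplus M$ (the orthogonality makes the algebraic sum closed, since a sequence $a_n v + m_n$ is Cauchy iff both $(a_n)$ and $(m_n)$ are). I can therefore write the second aleph vector uniquely as $u = \lambda v + w$ with $w \in M$. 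Now I apply Lemma \ref{unit_ortho} a second time, to the aleph vector $u$ with the same $h = v$, obtaining $u \perp M$; in particular $\inner{u}{w} = 0$ since $w \in M$. Expanding,
\[
0 = \inner{u}{w} = \lambda \inner{v}{w} + \inner{w}{w} = \|w\|^2,
\]
where $\inner{v}{w} = 0$ because $v \perp M \ni w$. Hence $w = 0$ and $u = \lambda v$, as desired.

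The argument is short because the lemma does all the heavy lifting, so there is no single hard step so much as a bookkeeping subtlety: one must verify that the two applications of Lemma \ref{unit_ortho} — to $v$ and to $u$ — target the \emph{same} subspace $M$ (they do, since both use $h = v$), and one must justify that $\C v \oplus M$ is genuinely closed so that the decomposition $u = \lambda v + w$ is legitimate. The forward property is already baked into the lemma, so it need not be reinvoked here. I expect the only place demanding a moment's care is the closedness of the orthogonal sum, which I would handle by the Cauchy-sequence observation above rather than by citing the general fact that a closed subspace plus a finite-dimensional subspace is closed.
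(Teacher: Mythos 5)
Your proof is correct, and while it rests on the same key lemma (Lemma \ref{unit_ortho}), it runs a genuinely different decomposition than the paper's. The paper fixes an \emph{arbitrary} test vector $h$, writes $h = \langle h, \mu \rangle \mu + R$ with $R \perp \mu$ (where $\mu$ is one of the two aleph vectors), uses cyclicity of $\mu$ together with the lemma applied to the other aleph $u$ to get $\langle R, u \rangle = 0$, and so obtains the scalar identity $\langle h, u \rangle = \langle h, \mu \rangle \langle \mu, u \rangle$ for every $h$, concluding that $u$ is a multiple of $\mu$ by a density/Riesz-type argument. You instead decompose the second aleph vector itself: you establish the orthogonal splitting $\h = \C v \oplus M$ with $M = \cls_\h\left(\spn\left\{ Tv : T \in \F \setminus \{I\} \right\}\right)$, write $u = \lambda v + w$, and kill $w$ with one application of the lemma (to $u$ with $h = v$). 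Your route buys two things. First, it avoids the quantifier over all $h$ and the closing density argument, finishing in one line once the splitting is in hand. Second, it makes rigorous a point the paper glosses over: the paper asserts that because $R \perp \mu$, the approximating sums $\sum_{j \in F_m} c_j T_j \mu \to R$ ``must'' have $c_{n_0} = 0$, which is not literally true for an arbitrary approximating sequence; the clean justification is exactly the closedness of the orthogonal sum $\C \mu \oplus M_\mu$ that you prove via the Cauchy-sequence argument. One small economy you could add: your first application of the lemma (to $v$ with $h = v$) is more than is needed, since $v \perp M$ already follows from inner-ness of $v$ alone --- the lemma's real force is only required for the cross application, $u \perp M$.
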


\begin{proof}
Let $h \in \h$ and suppose that $u$ and $\mu$ are both aleph vectors for $\F$. WLOG, suppose $\|\mu\| = 1$ and put
\[
h = \langle h, \mu \rangle \mu + R,
\]
with $R \perp \mu$. 
Now notice that
\[
\langle h, u \rangle = \langle \langle h, \mu \rangle \mu + R, u \rangle
= \langle h, \mu \rangle \langle \mu, u \rangle + \langle R, u \rangle.
\] 
We claim that $\langle R, u \rangle = 0$. In order to see this, 
suppose $\F$ is indexed by the monoid $(\Lambda, b, n_0)$ and suppose that $(F_m)$ is an increasing sequence of finite sets such that $F_m \to \Lambda$. As $\mu$ is cyclic, there exist constants $(c_j)_{j \in \Lambda} \subseteq \C$ so that
\[
\sum_{j \in F_m} c_j T_j \mu \ \xrightarrow{m \to \infty} \ R.
\]
However, as $R \perp \mu$, the constant $c_{n_0}$ in the expression above must be zero. In turn, for each $m$, we have $\sum_{j \in F_m} c_j T_j \mu \in \spn\left\{ T\mu : T \in \F \setminus \{ I\} \right\}$ and by Lemma \ref{unit_ortho}, we have 
\[
\left\langle \sum_{j \in F_m} c_j T_j \mu, u \right\rangle = 0.
\]
Since
\[
0 = \left\langle \sum_{j \in F_m} c_j T_j \mu, u \right\rangle \ \xrightarrow{m \to \infty} \ \langle R, u \rangle, 
\]
we conclude that $\langle R, u \rangle = 0$. 

Thus, $\langle h, u \rangle =  \langle h, \mu \rangle \langle \mu, u \rangle$, and as $h$ was arbitrary, the result follows from a basic density argument. 

\end{proof}

Before moving on to establishing the usefulness of aleph vectors, let us end this section with a characterization of inner vectors.

\begin{prop}\label{cheng-inner}
Let $\F  \subseteq \mathcal{B}(\h)$ be a forward operator monoid. Let $h \in \h$ and let 
\[
P_\cM \colon  \h \to  \cM : =\cls_\h\left(\spn\left\{ Th : T \in \F \setminus \{ I\} \right\} \right)
\]
be the orthogonal projection from $\h$ onto $\cM$.

Then for any $h \in \h$, the vector $h - P_\cM h$ is $\F$-inner (or zero), and every $\F$-inner vector arises in this way.
\end{prop}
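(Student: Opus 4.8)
The plan is to prove the two assertions separately, with the forward property doing the essential work in the first.

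For the forward direction, write $g = h - P_\cM h$, so that $g \perp \cM$ by construction. To show $g$ is $\F$-inner I must verify $\inner{Tg}{g} = 0$ for every $T \in \F \setminus \{I\}$; since $g \perp \cM$, it suffices to show that $Tg \in \cM$ for each such $T$. The first key step is to establish that $\cM$ is $\F$-invariant. Indeed, a spanning element of $\cM$ has the form $T_t h$ with $t \neq n_0$, and for any $T_s \in \F$ we have $T_s T_t h = T_{b(s,t)} h$; the forward property (item (\ref{forward1})) forces $b(s,t) \neq n_0$ because $t \neq n_0$, so $T_{b(s,t)} \neq I$ and $T_s T_t h \in \cM$. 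Boundedness of $T_s$ then extends this to all of $\cM$ by continuity. Granting the invariance, decompose $Tg = Th - T P_\cM h$: the first term lies in $\cM$ because $T \neq I$, and the second lies in $\cM$ because $P_\cM h \in \cM$ and $\cM$ is $\F$-invariant. Hence $Tg \in \cM$, which gives $\inner{Tg}{g} = 0$ and shows $g$ is inner (it is zero exactly when $h \in \cM$).

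For the converse, I would simply take $h$ to be the given inner vector $v$ itself. By the definition of inner-ness, $\inner{Tv}{v} = 0$ for all $T \in \F \setminus \{I\}$, so $v$ is orthogonal to $\spn\{ Tv : T \in \F \setminus \{I\}\}$ and hence to its closure $\cM$ (computed with $h = v$). Therefore $P_\cM v = 0$ and $v = v - P_\cM v$, exhibiting $v$ in the desired form.

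The only genuine obstacle is the $\F$-invariance of $\cM$ in the forward direction; this is precisely where the forward property is indispensable, since it is what rules out $T_s T_t = I$ and thereby keeps $T_s T_t h$ inside $\cM$ rather than escaping to a multiple of $h$. Everything else is orthogonal-projection bookkeeping together with the elementary observation that an inner vector is its own \emph{inner part}.
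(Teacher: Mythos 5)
Your proof is correct and follows essentially the same route as the paper: show $T(h - P_\cM h) \in \cM$ for $T \neq I$ using the orthogonality $h - P_\cM h \perp \cM$, and for the converse observe that an inner vector $v$ satisfies $v \perp \cM$, hence $P_\cM v = 0$ and $v$ is its own inner part. In fact you are slightly more careful than the paper, which simply asserts $T P_\cM h \in \cM$ without spelling out the $\F$-invariance of $\cM$ via the forward property, as you do.
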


\begin{proof}
The proof essentially follows the ideas of \cite[Proposition~3.1]{MR4019469}. 
The vector $h - P_\cM h$ is zero precisely when $h \in \cM$, so let us assume otherwise.

Let us first show that $h - P_\cM h$ is $\F$-inner. Notice that for and $T\in \F \setminus \{I\}$, we have both ${Th \in \cM}$ and $TP_\cM h \in \cM$. In turn, for any $T \in \F \setminus \{I\}$, this implies
\[
h - Ph_\cM \perp T(h - P_\cM h),
\]
and therefore $h - P_\cM h$ is $\mathcal{\F}$-inner. 

Conversely, by the definition of inner, we have $h\perp \cM$, and so $P_\cM h =0$. Hence, $h - P_\cM h = h$, which is inner. 

\end{proof}

In the next section, we will see how an aleph vector also gives a characterization of inner vectors.

\section{Least Squares and Optimal Approximants}\label{least-squares}

This section focuses on algorithmically determining if a vector is cyclic or inner for a given forward operator monoid. 
We first record a well-known result, tailored to our setting. Before doing so, we establish an important convention.

\begin{remark}
For convenience, from here forward, we assume a general relabelling of the monoid $(\Lambda, b, n_0)$ with $\Lambda = \{ n_0, n_0 + 1, n_0 +2, \ldots\}$, but still assume that $n_0$ is the unit for the indexing monoid and that $T_{n_0} = I$.
\end{remark}

\begin{prop}
Let $\F \subseteq \mathcal{B}(\h)$ be a forward operator monoid. TFAE: 
\begin{enumerate}
\item $h \in \h$ is cyclic for $\F$.
\item There exists an $\F$-cyclic vector $\sigma \in [h]_\F$.
\item For every $v \in \h$, there exist scalars $(c_j)_{j = n_0}^\infty \subseteq \C$ so that
\[
\sum_{j=n_0}^\infty c_j T_j h \ \to \ v. 
\]
\end{enumerate}
\end{prop}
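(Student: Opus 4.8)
The plan is to verify the three-way equivalence by proving $(1)\Leftrightarrow(2)$ through the invariant-subspace structure recorded just after Definition~\ref{main-def}, and $(1)\Leftrightarrow(3)$ by unwinding the definition of the closed linear span $[h]_\F$. The only implication carrying genuine content is $(2)\Rightarrow(1)$; the rest follow either trivially or from the closedness of $[h]_\F$ together with continuity of the relevant operations.

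For $(1)\Rightarrow(2)$ I would simply take $\sigma=h$: since $I\in\F$, we have $h=Ih\in\spn\{Th:T\in\F\}\subseteq[h]_\F$, and $h$ is cyclic by hypothesis, so it is itself the required cyclic vector lying in $[h]_\F$. For $(2)\Rightarrow(1)$ I would invoke the fact, noted after Definition~\ref{main-def}, that $[h]_\F$ is a closed, $\F$-invariant subspace, and indeed the \emph{smallest} such subspace containing $h$. Given a cyclic $\sigma\in[h]_\F$, minimality yields $[\sigma]_\F\subseteq[h]_\F$, because $[h]_\F$ is already a closed $\F$-invariant subspace containing $\sigma$. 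But cyclicity of $\sigma$ means $[\sigma]_\F=\h$, whence $[h]_\F=\h$; that is, $h$ is cyclic. This is the main content of the argument, though it is short once the minimality property is in hand.

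For $(3)\Rightarrow(1)$ I would observe that each truncation $\sum_{j=n_0}^{N}c_jT_jh$ is a finite linear combination of elements of $\{T_jh:T_j\in\F\}$ and hence lies in $[h]_\F$; since $[h]_\F$ is closed, the limit $v$ lies in $[h]_\F$ as well, and letting $v$ range over all of $\h$ forces $[h]_\F=\h$. Conversely, for $(1)\Rightarrow(3)$, cyclicity says precisely that every $v\in\h$ belongs to $\cls_\h(\spn\{T_jh:T_j\in\F\})$, i.e.\ that $v$ is a limit of finite linear combinations of the $T_jh$; taking a sequence of such combinations as the partial sums produces the expansion asserted in $(3)$.

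The one point I would take care over, and which I regard as the real subtlety here, is the interpretation of the convergence in $(3)$. I would read the displayed limit as convergence to $v$ of a sequence of finite truncations (finite linear combinations of the $T_jh$), which is exactly the content of membership in the closed span; under this reading $(1)\Leftrightarrow(3)$ is essentially a restatement of the definition of $[h]_\F$. It is worth flagging that the strictly stronger reading, in which a single fixed coefficient sequence $(c_j)$ must have the partial sums of one genuine series converging to $v$, is \emph{not} equivalent to cyclicity: it already fails for the shift monoid $\{S^j\}$ on $H^2$ with $h=1-z$ (an outer, hence cyclic, vector for which no fixed series $\sum_j c_j z^j(1-z)$ converges to the constant function $1$). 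Thus the approximation-theoretic reading is the one intended and the one that makes the equivalence hold.
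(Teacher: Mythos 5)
Your proof is correct and follows essentially the same route as the paper: the only substantive step in both is the inclusion $[\sigma]_\F \subseteq [h]_\F$ for $\sigma \in [h]_\F$ (minimality/invariance of $[h]_\F$), with the remaining implications definitional. Your closing remark on how to read item (iii) is also well taken: the paper implicitly adopts the same ``approximation by finite linear combinations'' interpretation, and your $H^2$ example with $h(z) = 1-z$ correctly shows that the stricter fixed-coefficient series reading would make the equivalence fail.
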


\begin{proof}
The implications $(i) \implies (ii)$ and $(iii) \implies (i)$ follow from definition. The implication $(ii) \implies (iii)$ is shown by noting that if $\sigma \in [h]_\F$, then $[\sigma]_\F \subseteq [h]_\F$. 
\end{proof}

This shows that in order to determine if an element $h \in \h$ is $\F$-cyclic, it suffices to approximate a known cyclic vector $\sigma \in \h$ with linear combinations of elements of the set $\{T_jh\}_{j = n_0}^\infty$. This naturally leads us to the following least-squares minimization problem, which asks for \textit{optimal} norm decay in such an approximation: 
\begin{problem}[Optimal approximation]
Let $\F \subseteq \mathcal{B}(\h)$ be a forward operator monoid and let $\sigma \in \h$ be cyclic for $\F$. Suppose $\F = \{T_j\}_{j = n_0}^\infty$ and let $h \in \h$. 
For each $N \ge n_0$,  find the constants solving
\[
\min_{c_{n_0}, \ldots, c_N \in \mathbb{C}} \left\| \sum_{j=n_0}^N c_j T_j h - \sigma \right\|^2.
\]
\end{problem}

Given the Hilbert space structure, we see that this minimization is really just asking us to find the orthogonal projection of $\sigma$ onto the subspace ${V_N := \text{span}\{ T_j h : j = n_0, \ldots, N\}}$ for each $N \ge n_0$. 
We will denote this projection by $P_N \sigma$, and the corresponding coefficients by $c_{n_0}^{(N)}, \ldots, c_N^{(N)}$, i.e., 
\[
P_N \sigma = \sum_{j=n_0}^N c_j^{(N)} T_j h.
\]
We call this projection the $N$th \textit{optimal approximant} to $\sigma$ in $[h]_\F$. We note that when $\F$ is a monoid generated by coordinate shift operators in spaces of analytic functions, these approximants are known as \textit{optimal polynomial approximants}. We point the interested reader to the survey \cite{MR4244844} for more regarding this topic. 

By the orthogonality of the projection, we have 
\[
\sigma - \sum_{j=n_0}^N c_j^{(N)} T_j h \ \perp \sum_{j=n_0}^N b_j T_j h,
\]
for all constants $b_{n_0}, \ldots, b_{N} \in \mathbb{C}$. 
In particular, for each $k = n_0, \ldots N$, we obtain
\[
\left\langle \sigma - \sum_{j=n_0}^N c_j^{(N)} T_j h, T_k h \right\rangle = 0.
\]
Rearranging yields
\[
\sum_{j=n_0}^N c_j^{(N)} \left\langle  T_j h, T_k h \right\rangle = \left\langle \sigma, T_k h \right\rangle.
\]
Putting this together for each $k = n_0, \ldots, N$, we arrive at the following linear system: 
\begin{equation}\label{opt-sys}
\begin{bmatrix}
\|h\|^2 & \langle h, T_{n_0+1} h \rangle & \dots & \langle h, T_N h \rangle \\
\langle T_{n_0+1} h,  h \rangle & \|T_{n_0+1}h \|^2 & \dots & \langle T_{n_0+1} h, T_N h \rangle \\
\vdots & \vdots & \ddots & \vdots\\ 
\langle T_N h,  h \rangle & \langle T_{N}h, T_{n_0+1} h \rangle & \dots & \|T_N h \|^2
\end{bmatrix}
\begin{bmatrix}
c_{n_0}^{(N)} \\
c_{n_0+1}^{(N)}\\
\vdots\\
c_{N}^{(N)}
\end{bmatrix}
=
\begin{bmatrix}
\langle \sigma, h \rangle\\
\langle \sigma, T_{n_0+1}h \rangle\\
\vdots\\
\langle \sigma, T_N h \rangle
\end{bmatrix}.
\end{equation}

Also, again by the orthogonality of the projection, we have 
\[
\| P_N \sigma\|^2 = \langle \sigma, P_N \sigma \rangle.
\]
This tells us that the square distance between $\sigma$ and $V_N$ is
\begin{align*} 
\operatorname{dist}^2(\sigma, V_N) &= \left\| P_N \sigma - \sigma \right\|^2\\
&= \left\| P_N \sigma \right\|^2 - 2\Re{\langle P_N \sigma, \sigma\rangle} + \left\| \sigma \right\|^2\\
&= \left\| \sigma \right\|^2 - \left\| P_N \sigma  \right\|^2. 
\end{align*}
Since $\cup_j V_j$ is dense in $[h]_\F$, we have that $\sigma \in [h]_\F$ if and only if this distance goes to zero. In turn, $h$ is cyclic if and only if 
\[
\left\| P_N \sigma  \right\| \ \xrightarrow{N \to \infty} \ \| \sigma \|.
\]

\begin{remark}
At this juncture, we pause to note several things. First, we have now uncovered that determining cyclicity is equivalent to a linear algebra exercise (finding $P_N\sigma$) and taking a limit ($\lim_{N \to \infty} \|P_N \sigma\|$). However, the element $\sigma$ was taken to be an arbitrary cyclic element. It is natural to ask whether or not the choice of $\sigma$ makes a difference in attempting to detect cyclicity in this way. In particular, is there an $\F$-cyclic element that lends itself to computational advantages when computing optimal approximants? It turns out this is one of the distinct advantages of an aleph vector, which we show now. 
\end{remark}

\begin{theorem}\label{al-approx}
Let $\F \subseteq \mathcal{B}(\h)$ be a forward operator monoid. Let $h, u \in \h$, with $u$ a normalized aleph vector for $\F$. Let $P_N u = \sum_{j=n_0}^N c_j^{(N)} T_j h $ be the orthogonal projection of $u$ onto $V_N: = \operatorname{span}\{T_j h : j = n_0, \ldots, N\}$. Then $h$ is $\F$-cyclic if and only if 
\[
c_{n_0}^{(N)} \langle h,  u \rangle \ \xrightarrow{N \to \infty} \ 1.
\]
\end{theorem}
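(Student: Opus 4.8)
The plan is to reduce the claim to the cyclicity criterion already established in this section. Since $u$ is cyclic with $\|u\| = 1$, the preceding discussion (applied with the arbitrary cyclic vector $\sigma$ taken to be $u$) tells us that $h$ is $\F$-cyclic if and only if $\|P_N u\| \to \|u\| = 1$, equivalently $\|P_N u\|^2 \to 1$. The whole theorem will then follow once I establish the clean identity
\[
\|P_N u\|^2 = c_{n_0}^{(N)} \langle h, u \rangle
\]
for every $N \ge n_0$; letting $N \to \infty$ immediately converts the norm criterion into the stated coefficient criterion.

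To establish this identity, first I would use the orthogonality of the projection to write $\|P_N u\|^2 = \langle u, P_N u \rangle$, and then expand $P_N u = \sum_{j=n_0}^N c_j^{(N)} T_j h$ to obtain
\[
\|P_N u\|^2 = \sum_{j=n_0}^N \overline{c_j^{(N)}} \, \langle u, T_j h \rangle.
\]
The crucial step—where the aleph hypothesis does all the work—is to apply Lemma \ref{unit_ortho}: because $u$ is an aleph vector, $u \perp \cls_\h(\spn\{ Th : T \in \F \setminus \{I\} \})$, and hence $\langle u, T_j h \rangle = 0$ for every $j \neq n_0$. Thus only the $j = n_0$ term survives, and since $T_{n_0} = I$ this leaves $\|P_N u\|^2 = \overline{c_{n_0}^{(N)}} \langle u, h \rangle$. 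Because the left-hand side is real, this quantity equals its own complex conjugate $c_{n_0}^{(N)} \langle h, u \rangle$, which is precisely the desired identity.

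I expect the main—indeed essentially the only—conceptual obstacle to be recognizing that it is Lemma \ref{unit_ortho}, rather than the bare inner-ness of $u$, that collapses the sum. Inner-ness of $u$ only controls inner products of the form $\langle u, T_j u \rangle$, whereas here we encounter $\langle u, T_j h \rangle$ with an arbitrary $h$, so the stronger orthogonality supplied by the lemma is exactly what is required. The remaining ingredients—the projection identity $\|P_N u\|^2 = \langle u, P_N u \rangle$ and the cyclicity criterion $\|P_N u\| \to \|u\|$—are already in hand from the earlier analysis of the optimal approximation problem, so once the identity above is in place the conclusion is immediate and no further estimation is needed.
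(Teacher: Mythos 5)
Your proof is correct and follows essentially the same route as the paper: both reduce cyclicity to the criterion $\|P_N u\|^2 \to \|u\|^2 = 1$ and then use Lemma \ref{unit_ortho} (not mere inner-ness) to collapse $\langle u, P_N u\rangle$ to the single term $c_{n_0}^{(N)}\langle h, u\rangle$. Your explicit handling of the complex conjugation, via the observation that $\|P_N u\|^2$ is real, is a small point the paper glosses over, but the argument is otherwise identical.
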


\begin{proof}
Considering the the previously established distance formula, we have
\begin{align*}
\operatorname{dist}^2(u, V_N) &=  \left\| u \right\|^2 - \left\| P_N u  \right\|^2\\
& = 1 - \left\| P_N u  \right\|^2\\
&= 1- \left\langle \sum_{j=n_0}^N c_j^{(N)} T_j h, u \right\rangle.
\end{align*}
However, as $u$ is an aleph vector, Lemma \ref{unit_ortho} reduces this expression to
\[
\operatorname{dist}^2(u, V_N) = 1 - c_{n_0}^{(N)}\left\langle h, u \right\rangle.
\]
Taking a limit then gives the result.
\end{proof}

\begin{remark}
This result shows that the approximants to an aleph vector should be much easier to compute than the approximants to some other non-aleph cyclic vector; for each $N \ge n_0$, we need only to find the coefficient $c_{n_0}^{(N)}$ instead of the entire sequence $c_{n_0}^{(N)}, \ldots, c_{N}^{(N)}$. When the aleph vector $u$ is approximated, the right hand side of the system \ref{opt-sys} becomes
\[
\left( \langle u, h \rangle, 0, \ldots, 0 \right)^T.
\]
In this case, calculating $\operatorname{dist}^2(u, V_N)$ is equivalent to finding only the $(n_0, n_0)$ entry of the inverse of the Gramian
\[
\left( \langle T_j h, T_k h \rangle \right)_{n_0 \le j,k \le N}. 
\]
This task is, ostensibly, more simple than inverting the entire matrix. 
\end{remark}

Thus far, we have seen that an aleph vector can be used to characterize other cyclic vectors and computationally aid in the determination of cyclicity. It turns out that an aleph can also be used to characterize inner vectors. 
\begin{theorem}\label{opa}
Let $\F \subseteq \mathcal{B}(\h)$ be a forward operator monoid. Let $u, h \in \h$, with $u$ an aleph vector for $\F$. Let $P_N u = \sum_{j=n_0}^N c_j^{(N)} T_j h $ be the orthogonal projection of $u$ onto $\operatorname{span}\{T_j h : j = n_0, \ldots, N\}$. Then $h$ is $\F$-inner if and only if $P_N u = P_{n_0} u$ for all $N \ge n_0$. 
\end{theorem}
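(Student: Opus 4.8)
The plan is to translate the condition ``$P_N u = P_{n_0} u$ for all $N$'' into a statement about the inner products $\langle h, T_k h\rangle$ and then recognize the latter as the definition of inner-ness. Since $T_{n_0} = I$ we have $V_{n_0} = \spn\{h\}$, so (assuming $h \neq 0$)
\[
P_{n_0} u = \frac{\langle u, h\rangle}{\|h\|^2}\, h =: \lambda h.
\]
Because $V_{n_0} \subseteq V_N$ and $P_{n_0} u \in V_N$, the nesting property of orthogonal projections shows that $P_N u = P_{n_0} u$ holds exactly when $P_{n_0} u$ is already the projection of $u$ onto the larger space $V_N$, i.e. exactly when $u - \lambda h \perp V_N$. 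Since the $k = n_0$ orthogonality $\langle u - \lambda h, h\rangle = 0$ is automatic from the choice of $\lambda$, the full family $\{P_N u = P_{n_0} u : N \ge n_0\}$ is therefore equivalent to
\[
\langle u - \lambda h, T_k h\rangle = 0 \qquad \text{for all } k > n_0.
\]

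The key simplification comes from Lemma \ref{unit_ortho}: since $u$ is an aleph vector, $\langle u, T_k h\rangle = 0$ for every $k > n_0$. Hence the displayed orthogonality collapses to $\lambda\,\langle h, T_k h\rangle = 0$ for all $k > n_0$, and I would read off both directions from here. If $h$ is $\F$-inner, then $\langle h, T_k h\rangle = \overline{\langle T_k h, h\rangle} = 0$ for every $k > n_0$, so the condition holds irrespective of $\lambda$; this settles the forward implication. Conversely, if $\lambda\,\langle h, T_k h\rangle = 0$ for all $k > n_0$ and $\lambda \neq 0$, then $\langle T_k h, h\rangle = 0$ for all such $k$, which is precisely the statement that $h$ is $\F$-inner.

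The step I expect to be the main obstacle is the degenerate case $\lambda = 0$, i.e. $\langle u, h\rangle = 0$. Here Lemma \ref{unit_ortho} already forces $u \perp T_k h$ for all $k \ge n_0$, so $P_N u = 0 = P_{n_0} u$ automatically, and the projection condition carries no information about $\langle h, T_k h\rangle$. Indeed, taking $\F$ to be the shift monoid on $H^2$ with aleph $u \equiv 1$, both $h = z$ (inner) and $h = z + z^2$ (not inner) satisfy $\langle u, h\rangle = 0$ and $P_N u = P_{n_0} u = 0$ for every $N$, so the equivalence cannot survive without excluding this case. I would therefore carry the argument through under the standing assumption $\langle u, h\rangle \neq 0$ (equivalently $P_{n_0} u \neq 0$), under which the computation above delivers the stated ``if and only if''; the converse genuinely fails in the absence of this hypothesis, and flagging it explicitly is the substantive point of the proof.
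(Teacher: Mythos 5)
Your argument is correct where it applies, and it takes a genuinely different route from the paper. The paper works coefficient-wise with the linear system \eqref{opt-sys}: for the forward direction it notes that inner-ness forces the Gram matrix $G_N = (\langle T_j h, T_k h\rangle)_{n_0 \le j,k \le N}$ to have zero first row and column off the $(n_0,n_0)$ entry, so the same is true of $G_N^{-1}$ and the coefficients stabilize; for the converse it extracts $\langle h, T_{n_0+1}h\rangle = 0$ from the $2\times 2$ system and then inducts. Your proof replaces all of this with the observation that $P_N u = P_{n_0}u$ if and only if $u - \lambda h \perp V_N$ (with $\lambda = \langle u, h\rangle/\|h\|^2$), which Lemma \ref{unit_ortho} collapses to $\lambda \langle h, T_k h\rangle = 0$ for $n_0 < k \le N$. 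This is cleaner: it needs no matrix inversion and no induction, and it sidesteps the implicit assumption in the paper's proof that the vectors $T_j h$ are linearly independent (without which the coefficients $c_j^{(N)}$ are not even well defined, though the projections are).

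More importantly, the degenerate case you flag is a genuine defect in the theorem as stated, not an artifact of your method. If $\langle u, h\rangle = 0$, then Lemma \ref{unit_ortho} gives $u \perp V_N$ for every $N$, so $P_N u = 0 = P_{n_0}u$ holds vacuously while $h$ may fail to be inner; your counterexample (the shift monoid on $H^2$, $u \equiv 1$, $h = z + z^2$, for which $\langle h, Sh\rangle = 1 \neq 0$) is valid, since $u \equiv 1$ is indeed an aleph for that monoid. The paper's own converse silently uses $\langle u, h\rangle \neq 0$: from $c_{n_0}^{(n_0+1)} = c_{n_0}^{(n_0)} = \langle u, h\rangle/\|h\|^2$, the first row of the $2\times 2$ system yields only $\langle h, T_{n_0+1}h\rangle\, c_{n_0+1}^{(n_0+1)} = 0$, and ruling out the case $c_{n_0+1}^{(n_0+1)} = 0$ via the second row requires $\langle u, h\rangle \neq 0$; when $\langle u, h\rangle = 0$ all coefficients vanish and nothing can be concluded. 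So Theorem \ref{opa} should carry the hypothesis $\langle u, h\rangle \neq 0$ (equivalently, $P_{n_0}u \neq 0$), exactly as you propose; only the ``only if'' direction survives without it.
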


\begin{proof}
Consider the system \ref{opt-sys} with respect to $u$ (i.e., replace $\sigma$ with $u$ in Equation \ref{opt-sys}), letting
\[
G_N = (\langle T_j h, T_k h \rangle)_{n_0 \le j,k \le N}.
\]
If $h$ is $\F$-inner, then the first row and column of $G_N$ must be comprised of zeros, except for the $(n_0, n_0)$ entry. The inverse of this matrix must also possess this property.
Thus, we see that $c_j^{(N)} = 0$ for all $j = n_0 +1, \ldots, N$ and $c_{n_0}^{(N)} = c_{n_0}^{(n_0)} $ for all $N \ge n_0$. 

Conversely, suppose that $c_{n_0}^{(N)} = c_{n_0}^{(n_0)}$ for all $N \ge n_0$. Considering the system
\[
\begin{bmatrix}
\|h\|^2 & \langle h, T_{n_0 +1}h \rangle \\
\langle T_{n_0 + 1}h, h \rangle &  \|T_{n_0 + 1}h\|^2
\end{bmatrix}
\begin{bmatrix}
c_{n_0}^{(n_0 + 1)} \\
c_{n_0 + 1}^{(n_0 + 1)}
\end{bmatrix}
=
\begin{bmatrix}
\langle u, h \rangle \\
0
\end{bmatrix}.
\]
quickly yields that $\langle h, T_{n_0 +1}h \rangle = 0$. Again, as $c_{n_0}^{(N)} = c_{n_0}^{(n_0)}$ for all $N \ge n_0$, a simple induction argument then shows that $\langle h, T_k h \rangle = 0$ for all $k > n_0$. 
\end{proof}
Results of this type are known as \textit{stabilization theorems}. If an $\F$-aleph vector exists, then $\F$-inner functions can be characterized by optimal approximants which are all constant multiples of the vector at hand; or, in other words, the optimal approximants of inner vectors are inimical to the  approximation scheme and immediately \textit{stabilize} under the action of the operator monoid. This behavior is, in some sense, antipodal to the behavior of optimal approximants to the aleph in a subspace generated by a cyclic vector; those approximants converge to the aleph vector while the approximants in the case of an inner vector never leave the span of the vector.

\section{Applications and Examples}\label{apps}

\subsection{The PDCP and Nikolskii's Shadow}
In the 1940s, Beurling \cite{MR1057613} and Wintner \cite{MR11497} independently considered the following problem: 
\begin{question}[Periodic Dilation Completeness Problem (PDCP)]
Extend $\psi \in L^2(0,1)$ to an odd 2-periodic function on the real line and consider the integer dilation system $\{ \psi(kx)\}_{k=1}^\infty$. For which $\psi$ is this system dense in $L^2(0,1)$? 
\end{question}
A system $\{ \psi_k\}_{k=1}^\infty$ is called \textit{complete} in a Hilbert space $\h$ if any vector in $\h$ can be approximated arbitrarily well by linear combinations of the functions in the system. In the setting of the PDCP, this is equivalent to $\psi$ being cyclic for the multiplicative forward operator monoid formed by $(B_k\psi)(x) = \psi(kx)$; any such function $\psi$ for which this action turns out a complete system is called a \textit{PDCP function}. This problem is of interest for a number of reasons, including its applications Diophantine approximation, Dirichlet series, and analytic number theory (see, e.g.,  \cite{MR11497, MR4203042}). 

By considering the Fourier series of $\psi$, Beurling connected the PDCP with a problem in Dirichlet series. Further, by using the Bohr lift, Beurling connected the PDCP to a problem concerning a power series in infinitely many variables. 

Nikolskii \cite{Nik} translated that problem to the space
\[
H^2_0 := \{ f \in H^2 : f(0) = 0\}
\]
by considering a multiplicate forward operator monoid, which, for $n\ge1$ is given by
\[
(T_nf)(z) = f(z^n), \ \ f\in H^2_0.
\]

There is a bijective correspondence between cyclic functions for this operator monoid, PDCP functions, and cyclic vectors for the monoid generated by the coordinate forward shifts acting on the Hardy space of the infinite-dimensional polydisc (see, e.g., \cite{MR4203042} and references therein). 

We will consider the power dilation monoid mentioned above and recover a result of Nikolskii by using the aleph vector for this monoid.

\begin{prop}
Consider the forward operator monoid $\mathcal{T} = (T_{n})_{n \ge 1}$ acting on $H^2_0$, where 
\[
(T_nf)(z) = f(z^n).
\]
Up to a constant multiple, $\mathcal{T}$ has a unique aleph, which is the identity function $u(z) = z$.
\end{prop}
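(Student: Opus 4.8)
The plan is to invoke Theorem~\ref{uniqueness}, which guarantees that an aleph, once shown to exist, is unique up to a multiplicative constant. Thus the entire content reduces to verifying that the candidate $u(z) = z$ is simultaneously $\mathcal{T}$-inner and $\mathcal{T}$-cyclic. I would first observe that $u \in H^2_0$, since $u(0) = 0$, and recall that $T_1 = I$, so that $\mathcal{T} \setminus \{I\} = \{ T_n : n \ge 2\}$.

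For innerness, I would simply compute the action of each $T_n$ on $u$: since $(T_n u)(z) = u(z^n) = z^n$, the inner-ness condition becomes $\langle z^n, z\rangle_{H^2} = 0$ for every $n \ge 2$. This is immediate from the orthogonality of the monomials $\{z^k\}_{k \ge 1}$ in $H^2$, so $u$ is $\mathcal{T}$-inner.

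For cyclicity, the key observation is again that $T_n u = z^n$, whence
\[
\spn\{ T_n u : n \ge 1 \} = \spn\{ z^n : n \ge 1\},
\]
which is precisely the space of polynomials vanishing at the origin. Since $H^2_0 = z H^2$ and the polynomials are dense in $H^2$, taking the closure of this span recovers all of $H^2_0$; equivalently, for any $f = \sum_{k \ge 1} a_k z^k \in H^2_0$ the partial sums $\sum_{k=1}^N a_k z^k$ converge to $f$ in norm. Hence $[u]_{\mathcal{T}} = H^2_0$ and $u$ is $\mathcal{T}$-cyclic.

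Combining the two verifications shows $u(z)=z$ is an aleph for $\mathcal{T}$, and Theorem~\ref{uniqueness} then forces it to be the unique one up to scaling. The striking feature of this argument is that neither step presents a genuine obstacle: the multiplicative structure $T_m T_n = T_{mn}$ collapses the orbit of $z$ into exactly the positive-degree monomials, which are simultaneously orthogonal to $z$ (giving inner-ness) and dense in $H^2_0$ (giving cyclicity). The only point requiring a moment's care is confirming that the closed span of $\{z^n : n \ge 1\}$ fills $H^2_0$ rather than a proper subspace, which follows from the standard density of polynomials in the Hardy space.
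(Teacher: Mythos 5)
Your proposal is correct and follows exactly the paper's own argument: compute $T_n u = z^n$, use orthogonality of the monomials to get inner-ness, use density of the span of $\{z^n : n \ge 1\}$ in $H^2_0$ to get cyclicity, and invoke Theorem~\ref{uniqueness} for uniqueness. The only difference is that you spell out the density step (via partial sums of the Taylor series) which the paper treats as immediate.
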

\begin{proof}
It is immediate that $(T_{n}u)(z) = z^n$, so $[u]_{\mathcal{C}}$ is just the closed linear span of the non-constant polynomials, which is equal to $H^2_0$. Hence, $u$ is $\mathcal{T}$-cyclic. 
Further, 
$\langle u, T_{n}u \rangle = \langle z, z^n \rangle = \delta_{n,1}$, so $u$ is $\mathcal{T}$-inner. 
The result then follows from Theorem \ref{uniqueness}.
\end{proof}

We can recover, with our framework, an example mentioned by Nikolskii \cite{Nik}. 

\begin{proposition}
Consider the forward operator monoid $\mathcal{T} = (T_{n})_{n \ge 1}$ acting on $H^2_0$, where 
\[
(T_nf)(z) = f(z^n).
\]
For any $d\ge1$ and $\lambda \in \C$, the function $f_\lambda(z) = z(\lambda - z^d)$ is cyclic for $\mathcal{T}$ if and only if $|\lambda| \ge 1$. 
\end{proposition}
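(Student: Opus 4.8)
The plan is to reduce both implications to one-variable phenomena along the multiplicative chain generated by $T_{d+1}$. Throughout, recall that $\{z^k\}_{k\ge1}$ is an orthonormal basis for $H^2_0$, that $T_n z^k = z^{nk}$, and that (by the previous proposition) the identity function $u(z)=z$ is the $\mathcal{T}$-aleph, in particular $\mathcal{T}$-cyclic. Writing $f_\lambda(z)=\lambda z - z^{d+1}$, a direct computation gives $T_n f_\lambda = \lambda z^n - z^{n(d+1)}$, which is the only structural fact both directions rely on.

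For sufficiency ($|\lambda|\ge 1$) I would build a one-variable model. Let $W\colon H^2(\D)\to H^2_0$ be the linear isometry determined by $z^k\mapsto z^{(d+1)^k}$ for $k\ge 0$; since the exponents $(d+1)^k$ are distinct, $W$ carries an orthonormal basis to an orthonormal system and is thus isometric onto the closed span $\mathcal{K} := \cls_{H^2_0}(\spn\{z^{(d+1)^k}:k\ge 0\})$. One checks the intertwining $T_{d+1}W = WS$, where $S$ is the forward shift on $H^2(\D)$, and that $f_\lambda = W(\lambda - z)$. Consequently $\cls_{H^2_0}(\spn\{T_{d+1}^k f_\lambda : k\ge 0\}) = W([\lambda - z]_S)$. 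The classical one-variable fact that $\lambda - z$ is $S$-cyclic precisely when $|\lambda|\ge 1$ (it is then outer, with its only zero on or outside $\mathbb{T}$) gives $[\lambda-z]_S = H^2(\D)$, so this subspace contains $W(1) = z = u$. Since the operators $T_{(d+1)^k}=T_{d+1}^k$ lie in $\mathcal{T}$, we obtain $u \in [f_\lambda]_{\mathcal{T}}$; as $u$ is $\mathcal{T}$-cyclic, $[f_\lambda]_{\mathcal{T}}\supseteq [u]_{\mathcal{T}}=H^2_0$, so $f_\lambda$ is cyclic.

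For necessity ($|\lambda|<1$) I would produce an explicit nonzero vector orthogonal to $[f_\lambda]_{\mathcal{T}}$. Set $g(z) = \sum_{j\ge 0}\bar\lambda^{\,j}\, z^{(d+1)^j}$; since $|\lambda|<1$, we have $\|g\|^2=\sum_{j\ge 0}|\lambda|^{2j}=(1-|\lambda|^2)^{-1}<\infty$, and as the lowest-order term is $z$, we get $0\neq g\in H^2_0$. The key observation is that $\langle z^m,g\rangle = \lambda^j$ when $m=(d+1)^j$ and $\langle z^m, g\rangle = 0$ otherwise, whence for every $n\ge 1$ the quantity $\langle T_n f_\lambda, g\rangle = \lambda\langle z^n,g\rangle - \langle z^{n(d+1)},g\rangle$ telescopes to $0$ (checking separately the cases according to whether or not $n$ is a power of $d+1$). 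Thus $g\perp T_n f_\lambda$ for all $n$, so $[f_\lambda]_{\mathcal{T}}\neq H^2_0$ and $f_\lambda$ is not cyclic.

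I expect the main obstacle to be the boundary case $|\lambda|=1$ in the sufficiency direction: there the naive Neumann series for $(\lambda-z)^{-1}$ fails to converge, so one genuinely needs the outer-function (Beurling) characterization of one-variable shift-cyclicity rather than an elementary geometric-series estimate. The remaining delicate points are verifying the intertwining $T_{d+1}W=WS$ together with $f_\lambda=W(\lambda-z)$, and the index bookkeeping in the orthogonality computation; both become routine once the correct one-variable model and the correct orthogonal vector $g$ have been identified.
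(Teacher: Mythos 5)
Your proof is correct, and its sufficiency half takes a genuinely different route from the paper's. The paper handles both implications with one coefficient computation: for $g = \sum_{k \ge 1} g_k z^k$ in $[f_\lambda]_{\mathcal{T}}^{\perp}$, orthogonality to each $T_j f_\lambda$ gives the recurrence $\overline{\lambda}\, g_j = g_{(d+1)j}$, and the paper argues that nonzero $\ell^2$ solutions exist precisely when $|\lambda| < 1$. Your necessity vector $g = \sum_{j \ge 0} \overline{\lambda}^{\,j} z^{(d+1)^j}$ is exactly that recurrence solution made explicit, supported on the chain of powers of $d+1$; in fact your version is the more careful one, since the paper's closed form $g_j = g_1 \overline{\lambda}^{\log j/\log(d+1) - 1}$ is only meaningful along that chain (the general solution carries an independent free coefficient on each multiplicative chain $\{ m(d+1)^k : k \ge 0\}$ with $(d+1) \nmid m$), and summing the paper's norm formula over \emph{all} $j \ge 1$ would converge only for $|\lambda| < (d+1)^{-1/2}$ rather than for all $|\lambda| < 1$. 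Where you truly diverge is sufficiency: instead of showing the recurrence forces $g_m = 0$ on every chain when $|\lambda| \ge 1$, you transport the problem to one variable via the isometry $W : z^k \mapsto z^{(d+1)^k}$ intertwining the shift $S$ with $T_{d+1}$, invoke the Beurling-theory fact that $\lambda - z$ is outer (hence $S$-cyclic) iff $|\lambda| \ge 1$, and conclude that the orbit of $f_\lambda$ under $T_{d+1}$ alone already contains the aleph $u(z) = z$, whence $H^2_0 = [u]_{\mathcal{T}} \subseteq [f_\lambda]_{\mathcal{T}}$. The paper's approach buys self-containedness: no outer-function theory, both directions from a single elementary computation. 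Yours buys conceptual clarity and slightly more: it locates the threshold $|\lambda| = 1$ at the zero of $\lambda - z$ crossing the unit circle, shows the aleph is reached using only the submonoid generated by $T_{d+1}$, and generalizes at no extra cost (any $f = \sum_k a_k z^{(d+1)^k}$ with $\sum_k a_k z^k$ outer is $\mathcal{T}$-cyclic). The supporting steps you flag---extending the intertwining from basis vectors by density, $W$ carrying closed spans to closed spans, and the case analysis in the orthogonality check---all go through.
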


\begin{proof}
Suppose $g \in [f_\lambda]_\mathcal{T}^\perp$ and put $g(z) = \sum_{k \ge 1} g_k z^k$. For any $j\ge 1$, we have
\begin{align*}
0 & = \langle g, T_j f_\lambda \rangle\\
&= \langle g, \lambda z^j - z^{(d+1)j} \rangle \\
& = \overline{\lambda} g_j - g_{(d+1)j}.
\end{align*}
One may check that the solution to this recurrence relation is given by 
\[
g_j = g_1 \overline{\lambda}^{\frac{\log(j)}{\log(d+1)} - 1}
\]
However, 
\[
\| g \|^2 = |g_1|^2 \sum_{j \ge 1} \left|\lambda\right|^{\frac{\log(j^2)}{\log(d+1)} - 2}.
\]
If $|\lambda| \ge 1$, then this series converges if and only if $g_1 = 0$, and equivalently, if and only if $g \equiv 0$. The result then follows. 
\end{proof}

Let us move to inner functions in this setting. In \cite{Nik}, Nikolskii characterized functions in the eigenvector bundle of $(T_n^*)$, which can be used to produce some examples of $\mathcal{T}$-inner functions (e.g., those functions for which $T_n^*f = \delta_{1,n}f$ are all $\mathcal{T}$-inner, but the set of inner functions is much larger than this). Let us give a simple sufficient condition for a function to be $\mathcal{T}$-inner.

\begin{prop}
Consider the forward operator monoid $\mathcal{T} = (T_{n})_{n \ge 1}$ acting on $H^2_0$, where 
\[
(T_nf)(z) = f(z^n).
\]
Suppose $\{p_j\}_{j\ge 1} \subseteq \mathbb{Z}_{>1}$ with $\gcd(p_j, p_k) = 1$ for all $j$ and $k$ whenever $j \neq k$. Then for any choice of constants $\{ a_j\}_{j=1}^\infty \subseteq \C$, the function
\[
f(z) = \sum_{j \ge 1} a_j z^{p_j}
\]
is $\mathcal{T}$-inner. 
\end{prop}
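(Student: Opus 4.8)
The plan is to verify the inner condition straight from its definition. Since $T_1 = I$ is the identity element of the monoid $\mathcal{T}$ (as $(T_1 f)(z) = f(z^1) = f(z)$), the vector $f$ is $\mathcal{T}$-inner precisely when $\langle T_n f, f \rangle = 0$ for every $n \ge 2$. First I would record that the exponents $\{p_j\}$ are pairwise distinct: if $p_j = p_k$ for some $j \neq k$, then $\gcd(p_j, p_k) = p_j > 1$, contradicting the coprimality hypothesis. Consequently $\{z^{p_j}\}_{j \ge 1}$ is a subset of distinct monomials, so that $f$ is a genuine element of $H^2_0$ whenever $\sum_j |a_j|^2 < \infty$, and its norm is simply $\sum_j |a_j|^2$.

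Next I would expand the relevant inner product. Writing $(T_n f)(z) = f(z^n) = \sum_{j \ge 1} a_j z^{n p_j}$ and using the orthonormality of the monomials $\{z^m\}_{m \ge 1}$ in $H^2$, I obtain
\[
\langle T_n f, f \rangle = \sum_{j,k \ge 1} a_j \overline{a_k} \, \langle z^{n p_j}, z^{p_k} \rangle = \sum_{\substack{j,k \ge 1 \\ n p_j = p_k}} a_j \overline{a_k}.
\]
The entire claim therefore reduces to showing that, for each fixed $n \ge 2$, the index set $\{(j,k) : n p_j = p_k\}$ is empty; once that is established, the sum is vacuous and vanishes for every choice of coefficients $(a_j)$, which is exactly what is needed.

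The heart of the matter is then a short divisibility observation. Suppose $n p_j = p_k$ with $n \ge 2$. If $j = k$, this forces $n = 1$, a contradiction. If $j \neq k$, then $p_j \mid p_k$, and since $p_j > 1$ this yields $\gcd(p_j, p_k) \ge p_j > 1$, contradicting the assumption that distinct $p$'s are coprime. In either case no such pair $(j,k)$ exists, so $\langle T_n f, f \rangle = 0$ for all $n \ge 2$, and $f$ is $\mathcal{T}$-inner.

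I do not anticipate a genuine obstacle: the statement is essentially a bookkeeping consequence of the coprimality hypothesis together with the fact that the monomials form an orthonormal basis of $H^2$. The only points demanding care are (i) remembering that the identity of the monoid is $T_1$, so the inner condition is tested only against indices $n \ge 2$, and (ii) cleanly separating the diagonal case $j = k$ from the off-diagonal case $j \neq k$ in the divisibility step, since the coprimality hypothesis only directly constrains pairs with $j \neq k$.
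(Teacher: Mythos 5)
Your proof is correct and takes essentially the same approach as the paper: the paper phrases the argument as disjointness of the Fourier-coefficient supports of $f$ and $T_nf$ for $n>1$, which is precisely your claim that the index set $\{(j,k) : np_j = p_k\}$ is empty, and your explicit diagonal/off-diagonal divisibility check is exactly the step the paper leaves implicit when it cites coprimality. No gaps; your separate treatment of the case $j=k$ (which the paper glosses over) is a sound addition, not a deviation.
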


\begin{proof}
For $n \ge 1$, put $S_n =  \supp\widehat{T_n f}$ and notice that since $\gcd(p_j, p_k) = 1$ for each $j \neq k$, we have $S_1 \cap S_n = \emptyset$ for all $n > 1$. In turn, $\langle f, T_n f \rangle = 0$ for all $n >1$ and the result follows. 
\end{proof}

We now move to another related forward operator monoid.

\subsection{Noor's Monoid}
A forward operator monoid related to the previous example is $\mathcal{W} = (W_n)_{n \ge 1}$, considered on $H^2$, given by 
\[
(W_n f)(z) = \frac{1-z^n}{1-z}f(z^n).
\]
This monoid is multiplicative; indeed, one may check 
\[
W_mW_n = W_{mn} \ \ \ \forall n,m \ge 1.
\]
Standing on the shoulders of Nyman \cite{Nyman}, Beurling \cite{Beurling-RH}, and Ba\'ez-Duarte \cite{B-D}, Noor has shown the following interesting result:
\begin{theorem}[Noor, \cite{Noor}]\label{SWNoor}
For $k \ge 2$, let 
\[
h_k(z) = \frac{1}{1-z}\log\left( \frac{1 - z^k}{k(1 - z)}\right).
\]
Let $\mathcal{N} := \operatorname{span}\{h_k : k \ge 2 \}$ and let $\mathcal{W} = \{ W_n\}_{n \ge 1}$, where $W_n$ acts on $f \in H^2$ by 
\[
(W_n f)(z) = \frac{1 - z^n}{1-z} \ f(z^n).
\]
The following are equivalent:
\begin{enumerate}
\item The closure of $\mathcal{N}$ in $H^2$ contains a cyclic vector for $\mathcal{W}$. 
\item The vectors $\{h_k \}_{k \ge 2}$ are complete in $H^2$.
\item The Riemann Hypothesis is true.
\end{enumerate}
\end{theorem}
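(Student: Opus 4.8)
The plan is to split the three-way equivalence into the two pieces $(1)\Leftrightarrow(2)$ and $(2)\Leftrightarrow(3)$, isolating the operator-monoid content (handled by the machinery of this paper) from the arithmetic content (the Nyman--Beurling--Báez-Duarte circle of ideas). The starting point is the algebraic backbone of $\mathcal{W}$ acting on the family $\{h_k\}$. Using $(W_n h_k)(z) = \frac{1-z^n}{1-z}\,h_k(z^n)$ together with the factorization $\frac{1-z^{nk}}{k(1-z^n)} = \frac{1-z^{nk}}{nk(1-z)}\big/\frac{1-z^n}{n(1-z)}$, a direct computation yields the clean recursion
\[
W_n h_k = h_{nk} - h_n \qquad (n,k \ge 1),
\]
where $h_1 \equiv 0$ (indeed $h_1(z) = \tfrac{1}{1-z}\log 1 = 0$). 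Since $nk \ge 2$ whenever $k \ge 2$, each $W_n h_k$ lies in $\mathcal{N}$, so $W_n(\mathcal{N}) \subseteq \mathcal{N}$ and hence $\cls_\h(\mathcal{N})$ is a closed $\mathcal{W}$-invariant subspace of $H^2$.

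With invariance in hand, $(1)\Rightarrow(2)$ is immediate: if $v \in \cls_\h(\mathcal{N})$ is $\mathcal{W}$-cyclic, then $H^2 = [v]_{\mathcal{W}} \subseteq \cls_\h(\mathcal{N})$, so $\{h_k\}$ is complete. For $(2)\Rightarrow(1)$ it suffices to exhibit one unconditional $\mathcal{W}$-cyclic vector living in $H^2$, since completeness then places it inside $\cls_\h(\mathcal{N}) = H^2$. My candidate is $v(z) = 1-z$, which I claim is the aleph vector for $\mathcal{W}$. Writing $W_n(1-z) = (1 + z + \cdots + z^{n-1})(1 - z^n)$, one checks $\langle 1-z, W_n(1-z) \rangle = 0$ for all $n \ge 2$, so $1-z$ is $\mathcal{W}$-inner. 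For cyclicity, if $g = \sum_k g_k z^k \perp [1-z]_{\mathcal{W}}$, then orthogonality to every $W_n(1-z)$ forces the partial sums $G(m) := \sum_{k<m} g_k$ to obey $G(2n) = 2G(n)$ for all $n \ge 1$; combined with the square-summability of $(g_k)$ (via a Cauchy--Schwarz estimate on dyadic blocks) this forces $G \equiv 0$, hence $g \equiv 0$. Thus $1-z$ is cyclic, and by Theorem \ref{uniqueness} it is the unique aleph. This settles $(1)\Leftrightarrow(2)$ using only the monoid structure.

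The remaining equivalence $(2)\Leftrightarrow(3)$ is the arithmetic heart and the main obstacle. Here the plan is to transport the Báez-Duarte refinement of the Nyman--Beurling criterion into $H^2$ via Noor's correspondence. That criterion asserts that the Riemann Hypothesis holds if and only if the indicator $\mathbf{1}_{(0,1)}$ lies in the $L^2(0,1)$-closed span of the integer dilations $\{\,t \mapsto \{1/(kt)\}\,\}_{k \ge 1}$ of the fractional-part function. The key step is to set up the (bounded, invertible) map carrying this dilation system to the family $\{h_k\}$ in $H^2$; the arithmetic is encoded in the Taylor coefficients of $h_k$, which compute to $H_l - H_{\lfloor l/k\rfloor} - \log k$, exactly the data Báez-Duarte's vectors carry. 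The delicate point to verify is that this map is an isomorphism matching the two systems and the target vector.

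What makes the single-vector Báez-Duarte statement upgrade to the full completeness statement $(2)$ is precisely the operator-theoretic framework of this paper: if the image $w$ of $\mathbf{1}_{(0,1)}$ is $\mathcal{W}$-cyclic (with $1-z$, the aleph, the natural candidate), then since $\cls_\h(\mathcal{N})$ is $\mathcal{W}$-invariant one has $w \in \cls_\h(\mathcal{N}) \iff [w]_{\mathcal{W}} \subseteq \cls_\h(\mathcal{N}) \iff \cls_\h(\mathcal{N}) = H^2$, i.e. membership of the target vector is equivalent to completeness of $\{h_k\}$. I expect the main obstacle to be establishing this correspondence rigorously and identifying the image of the target vector correctly; this is where the analytic number theory of \cite{B-D} and \cite{Noor} is genuinely needed, and I would follow their arguments rather than attempt to circumvent them, reserving the paper's own contribution for the transparent operator-monoid half $(1)\Leftrightarrow(2)$.
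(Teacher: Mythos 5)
The first thing to note is that the paper contains no proof of this statement at all: Theorem \ref{SWNoor} is imported verbatim from Noor \cite{Noor} as background, so there is no internal argument to compare yours against. Judged on its own terms, your proposal is sound and in fact goes further than the paper does. Your operator-theoretic half, $(i)\Leftrightarrow(ii)$, is correct and self-contained: the recursion $W_n h_k = h_{nk} - h_n$ (your factorization checks out, and it is exactly the identity the paper verifies later inside the proof of Theorem \ref{RH-reform}) gives $\mathcal{W}$-invariance of $\cls_{H^2}(\mathcal{N})$, and the unconditional cyclicity of $1-z$ is precisely Proposition \ref{W-aleph}; your partial-sum formulation $G(2n)=2G(n)$ with the Cauchy--Schwarz estimate on dyadic blocks is a cleaner rendering of the paper's dyadic-average argument and does force $G\equiv 0$, hence $g\equiv 0$. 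The arithmetic half, $(ii)\Leftrightarrow(iii)$, you defer to \cite{B-D} and \cite{Noor}; this is the honest move, since that equivalence is genuine analytic number theory that cannot be recovered from the monoid machinery, and it leaves your treatment with exactly the same epistemic status as the paper's (a citation), except that you have additionally supplied the elementary equivalence $(i)\Leftrightarrow(ii)$. One small correction within the deferred part: in Noor's transference the image of the B\'aez-Duarte target vector is the constant function $1$ (whose $\mathcal{W}$-cyclicity is the point recorded in \cite{MR4567499}), not the aleph $1-z$; this does not affect your logic, since the membership-iff-completeness step only needs \emph{some} cyclic vector as the target together with $\mathcal{W}$-invariance of $\cls_{H^2}(\mathcal{N})$, but it is worth getting the identification right if you flesh out that half following \cite{Noor}.
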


As pointed out in \cite{MR4567499}, the power dilation $T_n$ in the previous subsection and the operator $W_n$ are semiconjugate in the sense that, for all $n\ge1$, we have
\[
T_n(I - S) = (I - S)W_n.
\]
Also pointed out there is that the unit constant function is $\mathcal{W}$-cyclic. However, for any $n\ge 1$, 
\[
\langle 1, W_n1 \rangle = \langle 1, 1 + z + \dots + z^{n-1} \rangle = 1,
\]
so 1 is not $\mathcal{W}$-inner, and thus is not an aleph for $\mathcal{W}$. 

Let us now uncover the aleph for $\mathcal{W}$. 
Using this, we will give another reformulation of Noor's reformulation of the Riemann Hypothesis. 

\begin{proposition}\label{W-aleph}
Let $\mathcal{W} = \{ W_n\}_{n\ge 1}$ be the forward operator monoid given by
\[
(W_n f)(z) = \frac{1-z^n}{1-z}f(z^n),
\]
considered on $H^2$. Up to a constant multiple, there is a unique aleph vector for $\mathcal{W}$, which is the function $u(z) = 1 - z$.
\end{proposition}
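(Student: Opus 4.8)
The plan is to show directly that $u(z)=1-z$ is simultaneously $\mathcal{W}$-inner and $\mathcal{W}$-cyclic; once this is done the word \emph{unique} comes for free, since Theorem \ref{uniqueness} guarantees any aleph is determined up to a multiplicative constant. Everything rests on an explicit formula for $W_n u$. First I would compute, using $u(z^n)=1-z^n$ and $\tfrac{1-z^n}{1-z}=1+z+\cdots+z^{n-1}$,
\[
(W_n u)(z)=\bigl(1+z+\cdots+z^{n-1}\bigr)\bigl(1-z^n\bigr)=\sum_{k=0}^{n-1}z^k-\sum_{k=n}^{2n-1}z^k,
\]
so that the Taylor coefficient $\widehat{W_n u}(k)$ equals $+1$ for $0\le k\le n-1$, equals $-1$ for $n\le k\le 2n-1$, and vanishes otherwise. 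The remainder is bookkeeping with these two coefficient blocks in the $\ell^2$ model of $H^2$.

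Innerness is the easy half. For $n\ge 2$ the vector $u$ is supported on frequencies $0$ and $1$, both of which lie in the leading $+1$ block of $W_n u$; reading off the coefficients gives
\[
\langle W_n u, u\rangle = \widehat{W_n u}(0)\,\overline{\hat u(0)}+\widehat{W_n u}(1)\,\overline{\hat u(1)} = (1)(1)+(1)(-1)=0,
\]
which is exactly the inner condition for every $W_n$ with $n\ge 2$ (recall $W_1=I$).

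The cyclicity is the real obstacle, and where I expect to spend the effort. I would take $g=\sum_{k\ge 0}g_k z^k\in H^2$ orthogonal to every $W_n u$ and force $g=0$. Writing $G_n=\sum_{k=0}^{n-1}g_k$ for the partial sums, the block description turns each relation $\langle g, W_n u\rangle=0$ into the clean recurrence
\[
2G_n=G_{2n}\qquad(n\ge 1).
\]
Iterating along the dyadic chain starting at a fixed odd $m$ gives $G_{2^j m}=2^j G_m$, so $|G_{2^j m}|$ grows \emph{linearly} in $N=2^j m$. The crucial tension is that membership $g\in H^2$ forces, by Cauchy--Schwarz, the \emph{sublinear} bound $|G_N|\le \sqrt{N}\,\|g\|$; comparing these for $N=2^j m\to\infty$ yields $|G_m|\le \sqrt{m/2^{\,j}}\,\|g\|\to 0$, hence $G_m=0$ for every odd $m$. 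The recurrence propagates this to $G_N=0$ for all $N$, and telescoping $g_N=G_{N+1}-G_N$ gives $g\equiv 0$, so $u$ is cyclic.

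With both properties in hand, $u$ is an aleph, and Theorem \ref{uniqueness} finishes the uniqueness claim. The one point I would double-check is the growth-versus-decay comparison, since that is the only step where the genuinely Hilbert-space (rather than merely formal power series) nature of $H^2$ is used. A possible alternative route to cyclicity, worth noting, is the semiconjugacy $T_n(I-S)=(I-S)W_n$ combined with the known $\mathcal{W}$-cyclicity of the constant function $1$; but the partial-sum estimate appears to be the most self-contained path.
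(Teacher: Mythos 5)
Your proposal is correct, and the innerness half coincides with the paper's two-line computation. The cyclicity half, however, extracts the contradiction by a genuinely different mechanism. The paper differences consecutive orthogonality relations to obtain $2g_k = g_{2k}+g_{2k+1}$, exhibits $g_0$ as a dyadic average $g_0 = 2^{-j}\sum_{k=2^j}^{2^{j+1}-1}g_k$, and then uses a pigeonhole argument (in each dyadic block some coefficient has modulus at least $|g_0|$) to contradict square-summability, repeating the argument case by case to kill the remaining coefficients. You instead keep the relations un-differenced, package them as the partial-sum recurrence $G_{2n}=2G_n$, and play the resulting linear growth $|G_{2^j m}| = 2^j|G_m|$ against the Cauchy--Schwarz bound $|G_N|\le\sqrt{N}\,\|g\|$; this yields $G_m=0$ for all odd $m$, hence $G_N=0$ for all $N$ by the recurrence, hence $g\equiv 0$, in one sweep. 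Both proofs exploit the same tension between the dyadic doubling structure and $\ell^2$ decay, but yours buys a quantitative estimate ($|G_m|\le\sqrt{m/2^j}\,\|g\|$) and avoids the paper's normalization-and-iterate case analysis, while the paper's differencing makes the averaging structure of the relations more transparent. Your hedge about the semiconjugacy alternative is well placed: since $T_n(I-S)=(I-S)W_n$ intertwines in the wrong direction (and $W_nS=S^nW_n$, so $W_n(1-z)=(I-S^n)W_n1$), the known $\mathcal{W}$-cyclicity of the constant function $1$ does not obviously transfer to $1-z$, so the partial-sum argument is indeed the right self-contained path. Uniqueness via Theorem \ref{uniqueness} is handled the same way in both.
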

\begin{proof}
First we will show that $u$ is inner; for $k \ge 2$, we have 
\begin{align*}
\langle u, W_k u \rangle & = \langle 1-z, (1 + z + \dots + z^{k-1})(1-z^k) \rangle \\
& = \langle 1-z, 1 + z + \dots + z^{k-1} - z^k - \dots - z^{2k-1} \rangle \\
&= 1 - 1 = 0.
\end{align*}
For cyclicity, put $g(z) = \sum_{n\ge 0} g_n z^n$ and suppose that $g \in [u]_{\mathcal{W}}^\perp$. By studying Maclaurin coefficients, we will show that $g \equiv 0$.
Observe, for any $k\ge 1$, we have
\begin{align*}
0 &= \left\langle \sum_{n\ge 0} g_n z^n, W_k(1-z) \right\rangle\\
&= g_0 + \dots + g_{k-1} - g_k - \dots - g_{2k-1}.
\end{align*}
In turn,
\begin{align*}
0 &= \left\langle \sum_{n\ge 0} g_n z^n, (W_{k+1} - W_k)(1-z) \right\rangle\\
&= 2g_k - g_{2k} - g_{2k+1},
\end{align*}
which implies 
\begin{equation}\label{avg}
g_k = \frac{g_{2k} + g_{2k+1}}{2}.
\end{equation}
Varying $k \ge 1$ and using Equation \ref{avg}, we iteratively determine  expressions for $g_0$ as:
\begin{align*}
g_0 &= g_1\\
& = \frac{g_{2} + g_{3}}{2}\\
& = \frac{g_{4} + g_5 + g_{6} + g_{7}}{4}\\
& \ \ \ \vdots\\
& = \frac{1}{2^k} \sum_{k = 2^j}^{2^{j+1} - 1} g_j.
\end{align*}
This is a dyadic average; the average property implies that for each $k \ge 1$, there must be some $g_{n_k}^*$, $2^k \le n_k \le 2^{k+1} - 1$,  so that $|g_{n_k}^*| \ge |g_0|$. With this in hand, we break into cases: 
\begin{enumerate}
\item[I.] $g_0 \neq 0$: In this case, WLOG, assume $g_0 = 1$. Then, by the dyadic expression above, we have
\[
\|g\|_{H^2}^2 = \sum_{k\ge0} |g_k|^2 \ge \sum_{k\ge0} |g_{n_k}^*|^2 =  \infty.
\]
However, this is absurd, and we end up with the following:
\item[II.] $g_0  = 0$: In this case, assume WLOG, that $g_2 = 1$. We then set sail on the same ship as in case I to conclude that $g_2 = 0$. 
\end{enumerate}
Ultimately, our voyage arrives at the fact that $g_k = 0$ for all $k \ge 0$. Thus, $g \equiv 0$, and we conclude that $[u]_{\mathcal{W}}^\perp = \{0\}$, which means that $u$ is $\mathcal{W}$-cyclic. 
\end{proof}

As an immediate corollary, we recover a result in \cite{MR4567499}. 
\begin{corollary}
Let $\mathcal{W} = \{W_n\}_{n\ge1}$ be the forward operator monoid given by 
\[
(W_n f)(z) = \frac{1-z^n}{1-z}f(z^n),
\]
considered on $H^2$.
If $f \in H^2$ satisfies $f(0) = f'(0)$, then 
\[
\operatorname{dist}^2(1 - z, [f]_\mathcal{W}) = 2.
\]
In turn, any such $f$ cannot be $\mathcal{W}$-cyclic. 
\end{corollary}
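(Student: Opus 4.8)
The plan is to leverage the fact, just proved in Proposition~\ref{W-aleph}, that $u(z) = 1-z$ is an aleph vector for $\mathcal{W}$, and to show that the hypothesis $f(0) = f'(0)$ is exactly what is needed to force $u$ to be orthogonal to the \emph{entire} subspace $[f]_\mathcal{W}$. Once that orthogonality is in hand, the distance is immediate: if $u \perp [f]_\mathcal{W}$, then the closest point of $[f]_\mathcal{W}$ to $u$ is $0$, so $\operatorname{dist}^2(1-z, [f]_\mathcal{W}) = \|u\|_{H^2}^2 = \|1-z\|_{H^2}^2 = 2$.

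To establish the orthogonality I would split the generating set $\{W_n f : n \ge 1\}$ according to whether $n \ge 2$ or $n = 1$. For $n \ge 2$ the work is already done by Lemma~\ref{unit_ortho}: since $u$ is an aleph vector, it is orthogonal to $\cls_\h\!\left(\spn\{ W_n f : n \ge 2\}\right)$, and in particular $\langle u, W_n f\rangle = 0$ for every $n \ge 2$. The only term not covered is $n = 1$; here $W_1 = I$, so $W_1 f = f$, and I must check $\langle u, f\rangle = 0$ directly. Writing $f(z) = \sum_{n \ge 0} f_n z^n$, a one-line Maclaurin-coefficient computation gives $\langle f, 1-z\rangle = f_0 - f_1 = f(0) - f'(0)$, which vanishes precisely under the stated hypothesis. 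Combining the two cases, $u$ is orthogonal to $\spn\{ W_n f : n \ge 1\}$, hence to its closure $[f]_\mathcal{W}$, and the distance value $2$ follows as described.

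For the final assertion I would argue by contradiction. If some $f$ with $f(0) = f'(0)$ were $\mathcal{W}$-cyclic, then $[f]_\mathcal{W} = H^2$ would contain $u = 1-z$, forcing $\operatorname{dist}(1-z, [f]_\mathcal{W}) = 0$ and contradicting the value $\sqrt{2}$ just computed. Hence no such $f$ can be cyclic.

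This argument is essentially routine once Proposition~\ref{W-aleph} and Lemma~\ref{unit_ortho} are available, so I do not anticipate a substantial obstacle. The one point demanding care is the bookkeeping of the indexing convention: under the relabelling with $n_0 = 1$, the identity of the monoid is $W_1 = I$, so it is exactly the $n = 1$ term that falls \emph{outside} the scope of Lemma~\ref{unit_ortho} and must instead be supplied by the hypothesis. Recognizing that $f(0) = f'(0)$ is precisely the condition $\langle f, 1-z\rangle = 0$ is the small conceptual step that lets the two cases knit together.
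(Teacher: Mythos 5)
Your proposal is correct and rests on exactly the same two ingredients as the paper's proof: Lemma~\ref{unit_ortho} (orthogonality of the aleph $u = 1-z$ to $W_n f$ for $n \ge 2$) together with the computation $\langle f, 1-z\rangle = f(0) - f'(0) = 0$ for the $n=1$ term. The only difference is presentational --- the paper runs the argument through the linear system \eqref{opt-sys} and the finite-dimensional projections $P_N u$, then takes $N \to \infty$, whereas you conclude $u \perp [f]_{\mathcal{W}}$ directly; both yield $\operatorname{dist}^2(1-z,[f]_{\mathcal{W}}) = \|1-z\|_{H^2}^2 = 2$.
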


\begin{proof}
In this setting, the linear system \ref{opt-sys}, with respect to $\mathcal{W}$ and the aleph $u(z) = 1-z$, yields (for any $N \ge 1$)
\begin{align*}
\left(\langle W_j f, W_k f \rangle\right)_{1 \le j, k \le N} \left(c_0^{(N)}, \ldots, c_N^{(N)} \right)^T
&= \left( \langle u , f\rangle , 0, \ldots, 0\right)^T\\
&= \left( \overline{f(0) - f'(0)} , 0, \ldots, 0\right)^T .
\end{align*}
If $f(0) = f'(0)$ then $u \perp f$ and so the orthogonal projection of $u$ onto $V_N:= \spn\{W_k f : k = 1, \ldots, N\}$ is identically zero. The corresponding distance formula gives
\[
\operatorname{dist}^2\left(u, V_N\right)= \|u\|^2_{H^2} = 2. 
\]
Taking a limit as $N \to \infty$ then gives the result.
\end{proof}

Using the aleph for $\mathcal{W}$, we provide a reformulation of the Riemann Hypothesis.

\begin{theorem}\label{RH-reform}
Let $\mathcal{W} = \{W_n\}_{n\ge1}$ be the forward operator monoid given by 
\[
(W_n f)(z) = \frac{1-z^n}{1-z}f(z^n),
\]
considered on $H^2$, let $\mathcal{N}$ be as in Theorem \ref{SWNoor}, and let $\cls_{H^2}(\mathcal{N})$ denote the closure of $\mathcal{N}$ in $H^2$.

Then the Riemann Hypothesis is true if and only if there exists a function $f \in \cls_{H^2}(\mathcal{N})$ so that 
\[
c_1^{(N)} \xrightarrow{N \to \infty} \frac{\sqrt{2}}{\overline{f(0) - f'(0)}},
\]
where $c_1^{(N)}$ arises from the solution to the linear system
\[
\left( \langle W_jf, W_kf\rangle_{H^2}\right)_{1 \le j,k \le N} \left(c^{(N)}_1, \ldots, c^{(N)}_N\right)^T = \frac{1}{\sqrt{2}}\left( \overline{f(0) - f'(0)}, 0, \ldots, 0 \right)^T.
\]
\end{theorem}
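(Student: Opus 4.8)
The plan is to realize this statement as the concatenation of two results already in hand: Noor's equivalence (Theorem \ref{SWNoor}), which converts the Riemann Hypothesis into the existence of a $\mathcal{W}$-cyclic vector inside $\cls_{H^2}(\mathcal{N})$, and the aleph characterization of cyclicity (Theorem \ref{al-approx}), which detects cyclicity of a vector $f$ through the leading optimal-approximant coefficient of the \emph{normalized} aleph. The first thing I would record is that, by Proposition \ref{W-aleph}, the aleph for $\mathcal{W}$ is $1-z$, and since $\|1-z\|_{H^2}^2 = 2$, the normalized aleph is $u(z) = (1-z)/\sqrt{2}$. This is the source of the factor $\sqrt{2}$ appearing throughout the statement.

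Next I would identify the linear system in the statement with the optimal-approximant system \ref{opt-sys} for projecting this normalized aleph onto $V_N := \spn\{W_j f : j = 1, \ldots, N\}$. The Gramian $\left(\langle W_j f, W_k f\rangle\right)_{1\le j,k\le N}$ is exactly the coefficient matrix of \ref{opt-sys}. For the right-hand side, Lemma \ref{unit_ortho} (applied with the aleph $u$ and the vector $f$) forces $\langle u, W_k f\rangle = 0$ for every $k \ge 2$, so all but the first entry vanish; the remaining entry is $\langle u, f\rangle$, and a one-line computation with Maclaurin coefficients gives $\langle 1-z, f\rangle = \overline{f(0)-f'(0)}$, hence $\langle u, f\rangle = \tfrac{1}{\sqrt{2}}\overline{f(0)-f'(0)}$. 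Thus the $c_j^{(N)}$ of the statement are precisely the coefficients of $P_N u$, and $c_1^{(N)}$ is the leading coefficient appearing in Theorem \ref{al-approx}.

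With this dictionary in place the argument is immediate: Theorem \ref{al-approx} says $f$ is $\mathcal{W}$-cyclic if and only if $c_1^{(N)}\langle f, u\rangle \to 1$, and substituting the value of $\langle f, u\rangle$ computed from $f(0)$ and $f'(0)$ rearranges this into the stated limiting value of $c_1^{(N)}$. Feeding this equivalence into Noor's Theorem \ref{SWNoor} --- namely that $\cls_{H^2}(\mathcal{N})$ contains a $\mathcal{W}$-cyclic vector if and only if the Riemann Hypothesis holds --- yields the theorem.

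The only genuine subtlety, and the step I would treat most carefully, is the degenerate case $f(0) = f'(0)$: there $\langle u, f\rangle = 0$, so $u$ is orthogonal to $f = W_1 f$ as well as to every $W_k f$ with $k \ge 2$ (Lemma \ref{unit_ortho}); hence $u \perp V_N$, $P_N u = 0$, and $\operatorname{dist}^2(u, V_N) = \|u\|^2 = 1$ for every $N$, so such an $f$ is never $\mathcal{W}$-cyclic and the limiting value in the statement is ill-defined. I would therefore phrase the equivalence so that any $f$ supplied by the cyclicity hypothesis automatically satisfies $f(0) \neq f'(0)$, making the division legitimate in both directions. Beyond this bookkeeping there is no real obstacle, since Noor's theorem supplies all the number-theoretic content and Theorem \ref{al-approx} supplies the analytic mechanism; the work here is purely in matching the two interfaces.
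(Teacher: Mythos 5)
Your proposal is correct and takes essentially the same route as the paper: it chains Noor's Theorem \ref{SWNoor} to Theorem \ref{al-approx} applied with the normalized aleph $(1-z)/\sqrt{2}$ from Proposition \ref{W-aleph}, which is exactly the paper's argument. The only differences are cosmetic --- the paper additionally records the invariance $\mathcal{W}(\mathcal{N}) = \mathcal{N}$, which is not logically needed for the equivalence, while you instead spell out the degenerate case $f(0) = f'(0)$ (handled separately in the paper's preceding corollary) and the explicit computation $\langle 1-z, f\rangle = \overline{f(0)-f'(0)}$, which the paper leaves implicit.
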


\begin{proof}
Let $h_1 \equiv 0$ and let $h_k$ ($k \ge 2$) and $\mathcal{N}$ be as in Theorem \ref{SWNoor}. As pointed out in \cite{Noor}, putting 
\[
h_k(z) = \frac{1}{1-z}\left[ \log(1-z^k) - \log(1-z) - \log(k)\right],
\]
it is an elementary exercise to verify that 
\[
W_j h_k = h_{jk} - h_j
\]
for all $j,k \ge 1$. In turn, $\mathcal{W}(\mathcal{N}) \subseteq \mathcal{N}$, and since $\mathcal{W}(\mathcal{N}) \supseteq W_1(\mathcal{N}) = \mathcal{N}$, we have $\mathcal{W}(\mathcal{N}) = \mathcal{N}$. 

Via Theorem \ref{SWNoor}, we have that the Riemann Hypothesis is equivalent to the existence of a $\mathcal{W}$-cyclic vector, say $f$, being an element of $\cls_{H^2}(\mathcal{N})$. This is equivalent to an aleph vector for $\mathcal{W}$ being an element of $[f]_\mathcal{W}$. Invoking Theorem \ref{al-approx} with the normalized aleph vector $(1 - z)/\sqrt{2}$ then gives the result. 
\end{proof}

Let us conclude with a few remarks. The first difficulty with using Theorem \ref{RH-reform} is that even for the most simple functions $f \in \cls_{H^2}(\mathcal{N})$, the inner products $\langle W_jf, W_kf\rangle_{H^2}$ are non-trivial to compute. Should this difficulty be overcome, the next trouble here comes in inverting the corresponding Gram matrix, which is also a difficult task. However, there is some additional structure to these matrices. As pointed out in \cite{MR4567499}, for all $n\ge 1$, 
\[
W_n^*W_n = nI,
\]
and so there is a reduction in the number of degrees of freedom in the Gram matrix $\left(\langle W_jf, W_kf\rangle_{H^2}\right)_{1\le j,k \le N}$. However, it is presently unclear how to take advantage of this reduction.

\subsection*{Acknowledgements} 
Special thanks to Prof. Hari Bercovici for helpful discussion.

\bibliography{refs}

\providecommand{\bysame}{\leavevmode\hbox to3em{\hrulefill}\thinspace}
\providecommand{\MR}{\relax\ifhmode\unskip\space\fi MR }
\providecommand{\MRhref}[2]{%
  \href{http://www.ams.org/mathscinet-getitem?mr=#1}{#2}
}
\providecommand{\href}[2]{#2}
\begin{thebibliography}{10}

\bibitem{MR4549696}
Alexandru Aleman, Michael Hartz, John~E. McCarthy, and Stefan Richter,
  \emph{Free outer functions in complete {P}ick spaces}, Trans. Amer. Math.
  Soc. \textbf{376} (2023), no.~3, 1929--1978. \MR{4549696}

\bibitem{B-D}
Luis B\'{a}ez-Duarte, \emph{A strengthening of the {N}yman-{B}eurling criterion
  for the {R}iemann hypothesis}, Atti Accad. Naz. Lincei Cl. Sci. Fis. Mat.
  Natur. Rend. Lincei (9) Mat. Appl. \textbf{14} (2003), no.~1, 5--11.
  \MR{2057270}

\bibitem{MR2226127}
Bhaskar Bagchi, \emph{On {N}yman, {B}eurling and {B}aez-{D}uarte's {H}ilbert
  space reformulation of the {R}iemann hypothesis}, Proc. Indian Acad. Sci.
  Math. Sci. \textbf{116} (2006), no.~2, 137--146. \MR{2226127}

\bibitem{MR4244844}
Catherine B\'en\'eteau and Raymond Centner, \emph{A survey of optimal
  polynomial approximants, applications to digital filter design, and related
  open problems}, Complex Anal. Synerg. \textbf{7} (2021), no.~2, Paper No. 16,
  12. \MR{4244844}

\bibitem{Beurling-RH}
Arne Beurling, \emph{A closure problem related to the {R}iemann zeta-function},
  Proc. Nat. Acad. Sci. U.S.A. \textbf{41} (1955), 312--314. \MR{70655}

\bibitem{MR1057613}
\bysame, \emph{The collected works of {A}rne {B}eurling. {V}ol. 1},
  Contemporary Mathematicians, Birkh\"auser Boston, Inc., Boston, MA, 1989,
  Complex analysis. \MR{1057613}

\bibitem{MR4019469}
Raymond Cheng, Javad Mashreghi, and William~T. Ross, \emph{Inner functions in
  reproducing kernel spaces}, Analysis of operators on function spaces, Trends
  Math., Birkh\"{a}user/Springer, Cham, 2019, pp.~167--211. \MR{4019469}

\bibitem{MR4061942}
\bysame, \emph{Inner vectors for {T}oeplitz operators}, Complex analysis and
  spectral theory, Contemp. Math., vol. 743, Amer. Math. Soc., [Providence],
  RI, [2020] \copyright 2020, pp.~195--212. \MR{4061942}

\bibitem{MR4203042}
Hui Dan and Kunyu Guo, \emph{The periodic dilation completeness problem: cyclic
  vectors in the {H}ardy space over the infinite-dimensional polydisk}, J.
  Lond. Math. Soc. (2) \textbf{103} (2021), no.~1, 1--34. \MR{4203042}

\bibitem{MR4692840}
Christopher Felder, \emph{General optimal polynomial approximants,
  stabilization, and projections of unity}, Anal. Theory Appl. \textbf{39}
  (2023), no.~4, 309--329. \MR{4692840}

\bibitem{MR4567499}
Juan Manzur, Waleed Noor, and Charles~F. Santos, \emph{A weighted composition
  semigroup related to three open problems}, J. Math. Anal. Appl. \textbf{525}
  (2023), no.~1, Paper No. 127261, 10. \MR{4567499}

\bibitem{Nik}
Nikolai Nikolski, \emph{In a shadow of the {RH}: cyclic vectors of {H}ardy
  spaces on the {H}ilbert multidisc}, Ann. Inst. Fourier (Grenoble) \textbf{62}
  (2012), no.~5, 1601--1626. \MR{3025149}

\bibitem{Nyman}
Bertil Nyman, \emph{On the {O}ne-{D}imensional {T}ranslation {G}roup and
  {S}emi-{G}roup in {C}ertain {F}unction {S}paces}, University of Uppsala,
  Uppsala, 1950, Thesis. \MR{0036444}

\bibitem{MR1544621}
Freidrich Riesz, \emph{\"uber die {R}andwerte einer analytischen {F}unktion},
  Math. Z. \textbf{18} (1923), no.~1, 87--95. \MR{1544621}

\bibitem{Sampat}
Jeet Sampat, \emph{Shift-cyclicity in analytic function spaces}, arXiv preprint
  arXiv:2409.10224 (2024).

\bibitem{Noor}
S.~Waleed~Noor, \emph{A {H}ardy space analysis of the {B}\'{a}ez-{D}uarte
  criterion for the {RH}}, Adv. Math. \textbf{350} (2019), 242--255.
  \MR{3947645}

\bibitem{MR11497}
Aurel Wintner, \emph{Diophantine approximations and {H}ilbert's space}, Amer.
  J. Math. \textbf{66} (1944), 564--578. \MR{11497}

\end{thebibliography}
\bibliographystyle{amsplain}
\end{document}